\newtheorem{theorem}{Theorem}[section]
\newtheorem{proposition}[theorem]{Proposition}
\newtheorem{corollary}[theorem]{Corollary}
\newtheorem{lemma}[theorem]{Lemma}
\newtheorem{lemma-definition}[theorem]{Lemma-Definition}
\newtheorem{conjecture}[theorem]{Conjecture}
\newtheorem{claim}[theorem]{Claim}
\theoremstyle{definition}
\newtheorem{question}[theorem]{Question}
\newtheorem{remark}[theorem]{Remark}
\newcommand{\op}{\operatorname}
\renewcommand{\epsilon}{\varepsilon}
\begin{document}

\title{Special eccentricities of rational four-dimensional ellipsoids}
\author{Dan Cristofaro-Gardiner}
\maketitle

\begin{abstract}

A striking result of McDuff and Schlenk asserts that in determining when a four-dimensional symplectic ellipsoid can be symplectically embedded into a four-dimensional symplectic ball, the answer is governed by an ``infinite staircase" determined by the odd-index Fibonacci numbers and the Golden Mean.  Here we study embeddings of one four-dimensional symplectic ellipsoid into another, and we show that if the target is rational, then the infinite staircase phenomenon found by McDuff and Schlenk is quite rare.  Specifically,  in the rational case, there is an infinite staircase in precisely three cases --- when the target has ``eccentricity" $1, 2$, or $3/2$; in all other cases the answer is given by the classical volume obstruction except on finitely many compact intervals on which it is linear.  This verifies in the special case of ellipsoids a conjecture by Holm, Mandini, Pires, and the author.  

\end{abstract}

\section{Introduction}

\subsection{The main theorem}

A {\em symplectic embedding} of one symplectic manifold $(M_1,\omega_1)$ into another $(M_2,\omega_2)$ is a smooth embedding
\[ \Psi: M_1 \to M_2 \]
such that $\Psi^* \omega_2 = \omega_1.$.   Determining whether or not a symplectic embedding exists can be very subtle, even in simple examples.  For example, define the (open) {\em symplectic ellipsoid}
\[ E(a_1, \ldots, a_n) := \left \lbrace \pi \frac{ |z_1|^2}{a_1} + \ldots + \pi \frac{ |z_n|^2}{a_n} < 1 \right \rbrace \subset \mathbb{C}^n = \mathbb{R}^{2n}. \]
This inherits a symplectic form by restricting the symplectic form on $\mathbb{R}^{2n}.$  Define the {\em symplectic ball} 
\[B^{2n}(\lambda) := E(\lambda, \ldots, \lambda).\] 

In \cite{ms}, McDuff and Schlenk determined exactly when a four-dimensional symplectic ellipsoid can be symplectically embedded into a four-dimensional symplectic ball.  Specifically, they computed the function
\[ c(a) := \op{min} \lbrace \lambda \hspace{1 mm} | E(1,a) \to B^4(\lambda) \rbrace\]
for $a \ge 1$,  where here and below the arrow denotes a symplectic embedding.  They found that the function $c(a)$ has a surprisingly rich structure:

\begin{theorem}\cite{ms}
\begin{itemize}

\item For $1 \le a \le \tau^4$, the function $c(a)$ is given by an infinite staircase determined by the odd-index Fibonacci numbers.
\item For $a \ge \left(\frac{17}{6}\right)^2$, we have $c(a) = \sqrt{a}$; in other words, the only obstruction to the embedding problem is the classical volume obstruction.
\item For $\tau^4 \le a \le \left (\frac{17}{6}\right)^2 $, we have $c(a) = \sqrt{a}$, except on finitely many intervals on which it is linear.

\end{itemize}
\end{theorem}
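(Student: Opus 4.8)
The plan is to convert the ellipsoid-into-ball question into a finite-dimensional ball-packing problem and then to analyze the resulting family of linear obstructions. The first step is McDuff's theorem that $E(1,a)\hookrightarrow B^4(\lambda)$ if and only if the disjoint union $\coprod_i B^4(w_i(a))$ embeds symplectically into $B^4(\lambda)$, where $(w_1(a),w_2(a),\ldots)$ is the \emph{weight expansion} of $a$, obtained by repeatedly removing the largest square from a $1\times a$ rectangle (equivalently, read off the continued fraction of $a$). For $a=p/q$ rational this sequence is finite; for irrational $a$ one uses rational approximants together with continuity and monotonicity of $c$. So it suffices to understand when finitely many balls pack a ball.

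Next, I would invoke the complete solution of the ball-packing problem coming from Seiberg--Witten/Gromov theory (McDuff--Polterovich, Biran, Li--Li, Li--Liu): $\coprod_i B^4(w_i)\hookrightarrow B^4(\lambda)$ exactly when the volume bound $\sum_i w_i^2\le\lambda^2$ holds and also $\sum_i m_i w_i\le d\lambda$ for every \emph{exceptional class} $(d;m_1,m_2,\ldots)$ --- that is, for every class in the homology of a blowup of $\mathbb{CP}^2$ with $E\cdot E=K\cdot E=-1$ represented by a symplectic sphere. Translating back, $c(a)$ equals the supremum of $\sqrt a$ and of the \emph{obstruction functions} $\mu_{(d;\vec m)}(a):=\big(\sum_i m_i w_i(a)\big)/d$ over all exceptional classes. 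The whole theorem is then a statement about this explicit family: for which $a$ does some $\mu_{(d;\vec m)}$ exceed $\sqrt a$, and by how much?

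The appearance of the Golden Mean comes from organizing the exceptional classes via Cremona transformations (the Weyl-group action on $H_2$). From a few seed classes one generates an infinite sequence $(d_n;\vec m_n)$ by a fixed recursion, with $d_n$ and the $m_{n,i}$ built from the odd-index Fibonacci numbers $g_n$, which satisfy $g_{n+1}=3g_n-g_{n-1}$ and have $g_{n+1}/g_n\to\tau^2$. One must show that each $\mu_n$ is sharp (equal to $c$) on a small closed interval $I_n$, that the intervals $I_n$ increase monotonically up to $a=\tau^4$ with $c(\tau^4)=\tau^2=\sqrt{\tau^4}$, and that $c(a)=\sqrt a$ in the gaps between consecutive $I_n$. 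Both directions are needed: the classes $(d_n;\vec m_n)$ give the lower bounds $c\ge\mu_n$, while one must build explicit embeddings --- again through the ball-packing correspondence, inductively and using continued-fraction identities --- that realize these values and establish full fillability in the gaps.

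For $a\ge(17/6)^2$ I would show every obstruction function lies below $\sqrt a$; this is a quantitative finiteness argument bounding the length of the weight expansion and the sizes of $d$ and the $m_i$ that could occur in a volume-violating exceptional class at such $a$, with Biran-type packing-stability estimates doing most of the work. The same finiteness reduces the range $\tau^4\le a\le(17/6)^2$ to finitely many exceptional classes; intersecting the regions $\{\mu_{(d;\vec m)}\ge\sqrt a\}$ yields finitely many closed intervals, on each of which $c$ coincides with a single $\mu_{(d;\vec m)}$, hence is linear --- because every $w_i(a)$ is piecewise-linear in $a$ of locally constant combinatorial type, so $\mu_{(d;\vec m)}$ is linear there. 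The main obstacle is the combination of the middle two steps: assembling the infinitely many exceptional classes with exactly the right Fibonacci combinatorics and, above all, constructing the matching embeddings that show the staircase steps are sharp and that $c(a)=\sqrt a$ precisely in the gaps. This is a delicate interaction between number theory (continued fractions, the recursion on odd-index Fibonacci numbers, and arithmetic of $\tau$) and the combinatorics of the weight expansion, and it is exactly what forces the Golden Mean into the answer.
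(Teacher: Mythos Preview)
This theorem is not proved in the present paper at all: it is quoted from \cite{ms} as background, and the paper explicitly remarks ``In fact, they compute the function precisely, see \cite{ms}, but we do not need their exact result here.'' So there is no proof in the paper to compare your proposal against.

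That said, your outline is an accurate high-level summary of the strategy actually used by McDuff and Schlenk in \cite{ms}: the reduction to ball packing via the weight expansion, the characterization of the packing problem by exceptional classes and Cremona moves, the Fibonacci recursion $g_{n+1}=3g_n-g_{n-1}$ generating the staircase classes, and the finiteness argument above $(17/6)^2$. As a sketch it is correct in spirit; as a proof it is of course incomplete, since---as you yourself note---the real work lies in showing sharpness of each staircase step (constructing the embeddings) and in the detailed arithmetic verifying that no stray exceptional class interferes. Those are precisely the parts of \cite{ms} that occupy the bulk of the paper and cannot be dispatched in a paragraph.
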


In fact, they compute the function precisely, see \cite{ms}, but we do not need their exact result here.

The purpose of this note is to show that, within the family of rational ellipsoids, the rigidity found by McDuff-Schlenk is in fact quite rare.  To make this precise, for fixed $b \ge 1$, define the function
\begin{equation}
\label{eqn:defn}
c_b(a) = \op{min} \lbrace \lambda | E(1,a) \to E(\lambda,\lambda b) \rbrace.
\end{equation}
Then, the function $c_1(a)$ is precisely the McDuff-Schlenk function considered above.  The function $c _b(a)$ is a continuous function, for example by \cite[Lem. 5.1]{cgk}, but is not in general $C^1$, as seen for example by the McDuff-Schlenk result above.   Following for example \cite{mcduffnote}, we call $b$ the {\em eccentricity} of the ellipsoid $E(1,b)$.    

We can now state our main result:

\begin{theorem}
\label{thm:main}
Fix a rational $b \ge 1$.  Then, unless $b \in \lbrace 1, 2, 3/2 \rbrace$, we have $c_b(a) = \sqrt{ \frac{a}{b} }$, except for finitely many compact intervals on which it is linear. 
\end{theorem}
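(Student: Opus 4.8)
The plan is to pass from the embedding problem to ECH capacities, where $c_b$ becomes an explicit combinatorial supremum, and then to show that for rational $b$ outside the three exceptional values this supremum is ``eventually trivial'' and has only finitely many nontrivial linear pieces.

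\emph{Step 1 (reduction to capacities).} By McDuff's theorem that ECH capacities are a complete set of obstructions for symplectic embeddings of one four-dimensional ellipsoid into another, one has
\[
c_b(a)\;=\;\sup_{k\ge 0}\ \frac{N_k(1,a)}{N_k(1,b)},
\]
where $N_k(x,y)$ denotes the $(k+1)$st smallest entry, counted with multiplicity, of the nondecreasing rearrangement of $\{\,mx+ny : m,n\in\Z_{\ge 0}\,\}$; these are the normalized ECH capacities of $E(x,y)$. Since a lattice-point count gives $N_k(x,y)/\sqrt{k}\to\sqrt{2xy}$, each function $g_k(a):=N_k(1,a)/N_k(1,b)$ satisfies $g_k(a)\to\sqrt{a/b}$, so $c_b(a)\ge\sqrt{a/b}$; call $a$ \emph{obstructed} when the inequality is strict and let $Z_b$ be the obstructed set. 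Each $g_k$ is continuous, piecewise linear in $a$, and on any compact interval only finitely many $g_k$ have a breakpoint there; hence Theorem~\ref{thm:main} follows once we show (a) $Z_b$ is bounded, and (b) for $b\notin\{1,2,3/2\}$ only finitely many $g_k$ attain the supremum on the closure of $Z_b$.

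\emph{Step 2 (boundedness of $Z_b$).} Here I would combine the quantitative upper bound $N_k(1,a)\le\sqrt{2a(k+1)}+c(a)$ for an explicit constant $c(a)$ with the exact structure of $N_k(1,b)$ for $b=p/q$ in lowest terms: ordering $\{mq+np\}_{m,n\ge 0}$ with multiplicity and dividing by $q$ exhibits $q\,N_k(1,b)$ in terms of the numerical semigroup $\langle p,q\rangle$, whose element counting function is exactly known for large argument. Together these force $g_k(a)\le\sqrt{a/b}$ for every $k$ once $a$ exceeds an explicit $A_b$, so $Z_b\subseteq[1,A_b]$. Combined with the last remark of Step 1, this already shows $c_b$ equals $\sqrt{a/b}$ off finitely many linear pieces on $[1,A_b]$ \emph{provided} only finitely many $g_k$ contribute to the supremum there.

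\emph{Step 3 (the crux: no infinite staircase).} The only remaining possibility is that infinitely many $g_k$ attain the supremum on $[1,A_b]$, i.e.\ that $c_b$ has an infinite staircase as in \cite{ms}; I would rule this out for $b\notin\{1,2,3/2\}$ by analysing its accumulation point $a_\infty$. At $a_\infty$ the staircase steps converge to the volume curve, so $c_b(a_\infty)=\sqrt{a_\infty/b}$ and $a_\infty$ is a ``perfect point'': the supremum defining $c_b(a_\infty)$ is attained only in the limit along a sequence $k_i\to\infty$. Extremality of the successive steps forces the data $(k_i,m_i,n_i)$ of the obstructing capacities to satisfy a two-term linear recursion whose characteristic polynomial is a quadratic $P_b$ with coefficients determined by $b$; consequently $a_\infty$ is a root of $P_b$, hence a quadratic irrational whose type is pinned down by $b$ (for $b=1$ this root is $\tau^4$, the Golden-Mean value of \cite{ms}). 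On the other hand, the ``perfect point'' condition, written out using the semigroup description of Step 2, is an arithmetic identity linking $a_\infty$, $\langle p,q\rangle$, and the volume; reconciling this identity with the quadratic irrationality of $a_\infty$ and the rationality of $b$ leaves only three admissible quadratics $P_b$, corresponding exactly to $b=1$, $b=2$, and $b=3/2$. In every other case the staircase cannot close up, so only finitely many $g_k$ contribute and the proof is complete.

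\emph{Main obstacle.} Steps 1 and 2 are essentially bookkeeping with McDuff's theorem and standard capacity estimates. The genuine difficulty is Step 3: one must (i) single out, among the infinitely many a priori obstructive capacities, the finitely generated family that could produce an infinite staircase; (ii) extract the governing recursion from the near-minimality (``quasi-perfectness'') of the exceptional classes involved; and (iii) carry out the Diophantine argument that forces $b\in\{1,2,3/2\}$ and nothing else. This final rigidity statement — that precisely three rational eccentricities survive — is the heart of the theorem.
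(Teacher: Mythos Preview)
Your framework in Steps~1--2 is sound and aligns with the paper: McDuff's theorem reduces the problem to ECH capacities, the volume asymptotics give $c_b(a)\ge\sqrt{a/b}$, and boundedness of the obstructed set follows from known packing-stability results (the paper cites Buse--Hind rather than redoing the estimate). Your observation that any infinite staircase must accumulate at a specific quadratic-irrational point determined by $b$ is also correct and is exactly Lemma~\ref{prop:keyprop}, imported from \cite{tara}.

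The gap is Step~3. Your proposed mechanism --- that the ``perfect point'' condition together with the quadratic irrationality of $a_\infty$ leaves only three admissible quadratics $P_b$ --- does not work as stated. For \emph{every} rational $b=k/l$ the quadratic \eqref{eqn:keyequation} has a root $a_0\ge 1$, and the condition $c_b(a_0)=\sqrt{a_0/b}$ is a \emph{consequence} of accumulation, not an additional constraint that filters $b$. There is no Diophantine miracle that singles out $\{1,2,3/2\}$ directly from the recursion; the recursion happily produces a candidate accumulation point for every $b$, and your sketch gives no concrete reason why, say, $b=4/3$ or $b=5/2$ should fail while $b=3/2$ survives. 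Invoking ``quasi-perfect exceptional classes'' and an unspecified ``arithmetic identity'' is precisely where the argument would have to do real work, and none is supplied.

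What the paper actually does is orthogonal to your plan. It shows by elementary estimates (Lemmas~\ref{lem:nicebound} and \ref{lem:niceintegralbound}, Claim~\ref{clm:exceptionalclaim}) that for $b\notin\{1,2,3/2,4/3\}$ the accumulation point $a_0$ is \emph{small}, and then uses a handful of low-index ECH capacities (Lemma~\ref{lem:intlemma}) to produce explicit piecewise-linear lower bounds for $c_b$ on $[1,a_0]$. These bounds touch the volume curve only at isolated points where, by monotonicity and subscaling, $c_b$ is forced to be locally linear; since $a_0$ is one of these points, it cannot be a limit of distinct singular points. The value $b=4/3$ narrowly escapes these estimates and requires a separate, delicate Ehrhart-function computation (\S\ref{sec:43}) --- a case your sketch does not anticipate and which your Diophantine heuristic would have no way to detect as borderline.
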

 
Note that the quantity $\sqrt{\frac{a}{b}}$ represents the classical volume obstruction here.  (Symplectic embeddings must preserve volume.)  
 
 \begin{remark}

\begin{itemize}

\item In view of Theorem~\ref{thm:main}, it is natural to ask what is known about $c_b(a)$ when $b \in \lbrace 1, 2, 3/2 \rbrace$.  In fact, it was previously shown \cite{cgk} that in each of these cases, the function $c_b(a)$ starts with an infinite staircase, determined by an infinite sequence that generalizes the odd-index Fibonacci numbers.  So, from the point of view of infinite staircases for embeddings into rational ellipsoids, Theorem~\ref{thm:main} is an optimal result.   We also note that recently \cite{rr} have introduced a beautiful construction for explicitly constructing the embeddings required for the infinite staircase in the $b \in \lbrace 1, 2, 3/2 \rbrace$ case using almost toric fibrations.

\item For reasons related to the previous bullet point, Theorem~\ref{thm:main} was originally conjectured in \cite{cgk}, see \cite[Conj. 1.8]{cgk}.

\end{itemize}

\end{remark}

It is interesting to compare Theorem~\ref{thm:main} with a recent result of Usher \cite{usher}.  Usher studied an analogous function for embeddings into the {\em four-dimensional polydisc} $P(a,b) := D^2(a) \times D^2(b)$, namely he studied the function 
\[p_b(a) = \op{min} \lbrace \lambda | E(1,a) \to P(\lambda, \lambda b) \rbrace\] 
for fixed $b$. He found that for irrational $b$, there are infinitely many values of $b$ for which the function $p_b(a)$ has an infinite staircase.  This gives added intrigue to the following question, which is natural in view of Theorem~\ref{thm:main}.

\begin{question}
Are there irrational numbers $b$ for which $c_b(a)$ has infinitely many singular points? 
\end{question}

Here, by a {\em singular point}, we mean a value of $a$ where $c_b(a)$ is not differentiable.
 
\subsection{Reflexive polygons}
\label{sec:reflex}

Theorem~\ref{thm:main} verifies in a special case a recent conjecture of the author and Holm, Mandini, and Pires.

To explain this in more detail, we need to recall some terminology from (for example) \cite{concave, concaveconvex}.  Let $\Omega \subset \mathbb{R}^2$ be a region in the first quadrant.  We define the {\em toric domain} corresponding to $\Omega$ to be the subset
\[ X_{\Omega} = \lbrace (z_1, z_2) | (\pi |z_1|^2, \pi |z_2|^2 ) \in \Omega \rbrace \subset \mathbb{C}^n = \mathbb{R}^{2n},\]
with the symplectic form inherited from the standard from on $\mathbb{R}^{2n}$.   For example, when $\Omega$ is a triangle with legs on the axes, then $X_{\Omega}$ is an ellipsoid; when $\Omega$ is a rectangle with legs on the axes, then $X_{\Omega}$ is a polydisc.

A toric domain $X_{\Omega}$ is called a {\em convex toric domain} if $\Omega$ is a convex connected open subset of the first quadrant containing the origin, and is called {\em rational} if $\Omega$ has rational vertices.  We can define the ellipsoid embedding function $c_{\Omega}(a)$ for any convex toric domain analogously to the definition of $c_b(a)$.

Now recall that a convex polygon with integral vertices is called {\em reflexive} if its dual polygon is also integral.  It is is known that this is equivalent to the triangle having one interior lattice point.

We can now state the conjecture introduced at the beginning of this section:

\begin{conjecture}[\cite{tara}]
\label{conj:hard}
The embedding function $c_\Omega(a)$ of a rational convex toric domain has infinitely many singular points only if some scaling of $\Omega$ is reflexive.
\end{conjecture}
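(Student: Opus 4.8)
The plan is to reduce Conjecture~\ref{conj:hard}, via the sharpness of ECH capacities for this class of embedding problems, to a finiteness statement about lattice polygons and then to a Diophantine analysis of a self-similar ``staircase recursion.'' Throughout I argue the contrapositive: assuming $c_\Omega$ has infinitely many singular points, I aim to produce a reflexive rescaling of $\Omega$. \emph{Step 1: reduction to an infinite staircase on the volume curve.} Because $\Omega$ is a convex toric domain and the source is an ellipsoid, the ECH-capacity obstruction to the embedding problem is sharp \cite{concave, concaveconvex}, so
\[
c_\Omega(a) \;=\; \sup_{k \ge 1}\, \frac{c_k\bigl(E(1,a)\bigr)}{c_k(X_\Omega)},
\]
where $c_k$ denotes the $k$-th ECH capacity. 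Each $a \mapsto c_k(E(1,a))$ is continuous, piecewise linear, and has finitely many singular points on any compact interval, so $c_\Omega$ is an upper envelope of countably many such functions. Since $c_\Omega$ agrees with the volume obstruction $\sqrt{a/(2\,\op{area}(\Omega))}$ once $a$ is large (a folding/stabilization fact), infinitely many singular points forces them to accumulate at some finite $a_\infty$; comparing with the known volume asymptotics of ECH capacities ($c_k(X_\Omega)/\sqrt{k}\to\sqrt{2\,\op{area}(\Omega)}$, and likewise for the ellipsoid) shows this accumulation can happen only where $c_\Omega$ meets the volume curve, i.e.\ $c_\Omega(a_\infty)^2 = a_\infty/(2\,\op{area}(\Omega))$. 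Thus it suffices to show that an infinite staircase converging to a point of the volume curve forces a reflexive rescaling of $\Omega$.

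\emph{Step 2: extracting a linear recursion.} At $a_\infty$ the infinitely many active indices $k$ correspond, via McDuff's reduction of ellipsoid embeddings to ball packings together with the weight decompositions of $E(1,a)$ and of $\Omega$, to an infinite family of exceptional classes $E_n$ in iterated toric blow-ups of the (orbifold resolution of the) toric surface attached to $X_\Omega$, each with $E_n\cdot E_n = E_n\cdot K = -1$ and with ``degree vectors'' $d_n$ satisfying $\|d_n\|\to\infty$. The crucial point -- and the only place the hypothesis that $\Omega$ has \emph{finitely many} edges enters -- is that the ambient lattice of relevant classes is finite-dimensional: a pigeonhole argument on the finitely many combinatorial ``shapes'' of obstruction available from a fixed $\Omega$ shows that $(d_n)$ is eventually generated by a fixed recursion $d_{n+1} = A\,d_n + (\text{bounded correction})$ for some integral matrix $A$ preserving the intersection lattice and fixing the canonical class $K$. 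In particular $a_\infty$ is forced to be a quadratic irrational whose weight (continued-fraction) expansion is eventually periodic, with period synchronized to the edge data of $\Omega$.

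\emph{Step 3: the Diophantine constraint is reflexivity.} The matrix $A$ lies in the Weyl group of the blow-up and fixes $K$; since $\|d_n\|$ grows, $A$ is non-elliptic and so has an eigenvalue strictly greater than $1$. Transporting the action of $A$ back to the edge data of $\Omega$ converts the existence of such an $A$ into the solvability of a Markov- or Pell-type Diophantine equation attached to $\Omega$, and that equation admits the required infinite family of solutions precisely when the anticanonical lattice polygon of $\Omega$ has a single interior lattice point after rescaling -- that is, precisely when some scaling of $\Omega$ is reflexive. This yields the contrapositive of the conjecture. Theorem~\ref{thm:main} is exactly the two-edge instance of this scheme: there $\Omega$ is a triangle, the recursion specializes to the Fibonacci/Pell recursions of \cite{cgk}, and the only solutions are the three reflexive triangles, corresponding to $b\in\{1,2,3/2\}$.

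\emph{The main obstacle} is Step 2: showing that an infinite obstructive family is forced to become self-similar, and controlling the bounded correction term -- equivalently, ruling out a ``non-periodic'' infinite staircase, i.e.\ showing that $a_\infty$ is a quadratic irrational rather than, say, transcendental. For ellipsoid targets this is forced by the explicit structure of the weight expansion of $a_\infty$ analyzed in \cite{cgk}, but for a general rational convex toric domain one needs a uniform version of this rigidity, combining a volume/positivity estimate (infinitely many exceptional classes pressed against the volume curve constrain one another's degree vectors) with the finiteness of the number of edges of $\Omega$. This is expected to be the delicate heart of a full proof of Conjecture~\ref{conj:hard}.
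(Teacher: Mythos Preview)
The statement you are attempting to prove is stated in the paper as a \emph{conjecture}, not a theorem; the paper does not claim a proof of it in general. What the paper actually proves is the special case where $\Omega$ is a triangle (i.e.\ the target is an ellipsoid), via Theorem~\ref{thm:main}, and then deduces the ellipsoid case of Conjecture~\ref{conj:hard} as an immediate corollary from the classification of reflexive triangles. So there is no ``paper's own proof'' of the full conjecture to compare against.

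Your write-up is a plausible heuristic outline rather than a proof, and you yourself flag the gap. Concretely: Step~2 asserts that an infinite family of obstructive classes must eventually obey a fixed integral linear recursion via a ``pigeonhole argument on finitely many combinatorial shapes,'' but no such argument is supplied, and finiteness of edges of $\Omega$ does not by itself bound the combinatorics of the weight expansions of the varying $a$-values. Step~3 then claims that the resulting Diophantine condition is \emph{equivalent} to reflexivity; this equivalence is not established, and indeed even in the ellipsoid case the paper does not proceed this way --- it rules out staircases for non-reflexive $b$ by locating the accumulation point explicitly (Lemma~\ref{prop:keyprop}), bounding it into a region covered by a few ECH-capacity steps (Lemmas~\ref{lem:nicebound}--\ref{lem:niceintegralbound} and Lemma~\ref{lem:intlemma}), and then using monotonicity/subscaling to force local linearity, with a bespoke argument for $b=4/3$. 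None of this uses a self-similarity/recursion mechanism on the obstructive classes. Until Step~2 is made rigorous (eventual periodicity of the obstructive sequence for a general rational convex $\Omega$) and Step~3 is proved (that the recursion forces reflexivity), the proposal remains a strategy, not a proof.
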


An integral triangle with vertices $(m,0), (0,0),$ and $(0,n)$ and $m \ge n$ is reflexive if and only if
\[ (m,n) \in \lbrace (3,2), (4,2), (3,3) \rbrace.\]
Indeed, if $n = 1$, then the triangle has no interior lattice points at all; if $n \ge 3$, then the triangle contains the $(3,3)$ triangle, which has exactly one interior lattice point, so there are too many interior lattice points unless $n = m = 3$; and if $n = 2$, there are no interior lattice points if $m = 2$, and too many if $m > 4$. 

In particular, our main Theorem~\ref{thm:main} therefore implies the following corollary.

\begin{corollary}
Conjecture~\ref{conj:hard} holds for four-dimensional ellipsoids. 

\end{corollary}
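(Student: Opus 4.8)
The plan is to deduce the corollary from Theorem~\ref{thm:main} together with the classification of reflexive triangles with legs on the axes recorded in Section~\ref{sec:reflex}; granting Theorem~\ref{thm:main}, the corollary is essentially a matter of bookkeeping.

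\emph{Normalization and reduction to $c_b(a)$.} If $X_\Omega$ is a four-dimensional ellipsoid then $\Omega$ is a triangle with legs on the coordinate axes, and rationality of $X_\Omega$ means $\Omega$ has rational vertices. Rescaling $\Omega$ multiplies $c_\Omega(a)$ by a positive constant, hence does not change its set of singular points, and does not affect whether some scaling of $\Omega$ is reflexive; so I will assume $\Omega$ has vertices $(0,0),(m,0),(0,n)$ with $m,n$ coprime positive integers. Since $(z_1,z_2)\mapsto(z_2,z_1)$ is a symplectomorphism carrying $X_\Omega$ to the toric domain of the reflected triangle, I will also assume $n \ge m$. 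Then $X_\Omega = E(m,n) = m\,E(1,b)$ with $b := n/m \ge 1$, so $\lambda X_\Omega = E(\lambda m, \lambda m b)$ and therefore $c_\Omega(a) = \tfrac1m\,c_b(a)$. In particular $c_\Omega$ and $c_b$ have exactly the same singular points, so it suffices to show that $c_b$ has only finitely many singular points whenever $b \notin \{1,2,3/2\}$.

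\emph{Applying Theorem~\ref{thm:main}.} Since $b = n/m$ is rational and $\ge 1$, Theorem~\ref{thm:main} gives, when $b \notin \{1,2,3/2\}$, a covering of the domain of $c_b$ by finitely many closed intervals on each of which $c_b(a)$ is either equal to the (smooth) function $\sqrt{a/b}$ or affine. A continuous function that is piecewise smooth with finitely many pieces is differentiable outside a finite set, so $c_b$, and hence $c_\Omega$, has only finitely many singular points. Equivalently: if $c_\Omega$ has infinitely many singular points then $b \in \{1,2,3/2\}$, i.e.\ $(m,n) \in \{(1,1),(1,2),(2,3)\}$.

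\emph{Checking reflexivity.} It remains to observe that in each of these three cases some scaling of $\Omega$ is reflexive. Scaling the $(1,1)$-triangle by $3$ yields the triangle with vertices $(0,0),(3,0),(0,3)$; scaling the $(1,2)$-triangle by $2$ and swapping coordinates yields the triangle with vertices $(0,0),(4,0),(0,2)$; and swapping coordinates in the $(2,3)$-triangle yields the triangle with vertices $(0,0),(3,0),(0,2)$. By the computation in Section~\ref{sec:reflex} these are the $(3,3)$-, $(4,2)$-, and $(3,2)$-triangles, i.e.\ precisely the reflexive triangles with legs on the axes, which is the desired conclusion.

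\emph{Where the difficulty lies.} Beyond Theorem~\ref{thm:main} itself --- which carries all of the analytic content --- there is essentially no obstacle. The one step meriting a word of care is the passage from ``$c_b(a)=\sqrt{a/b}$ off finitely many compact intervals of linearity'' to ``finitely many singular points'': here one uses that $\sqrt{a/b}$ is differentiable on $(0,\infty)$ and that, the exceptional intervals being finite in number and compact, there are only finitely many interval endpoints, each of which contributes at most one point of non-differentiability.
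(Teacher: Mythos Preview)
Your argument is correct and follows essentially the same route as the paper: invoke Theorem~\ref{thm:main} to reduce to $b\in\{1,2,3/2\}$, and then match these with the reflexive triangles classified in \S\ref{sec:reflex}. The paper treats the corollary as immediate from this classification, so your write-up is simply a more explicit version of the same deduction; the only cosmetic point is that the phrase ``covering of the domain of $c_b$ by finitely many closed intervals'' is slightly off since $[1,\infty)$ is unbounded, but your last paragraph already states the correct reasoning (finitely many exceptional compact intervals, hence finitely many endpoints).
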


\subsection{Acknowledgements}

This paper is an offshoot of my joint work with Tara Holm, Alessia Mandini, and Ana Rita Pires \cite{tara}, part of which was summarized in \S\ref{sec:reflex}, which is aimed at understanding the phenomenon of infinite staircases in considerably generality.  I thank my wonderful collaborators for many stimulating discussions.  I also thank Roger Cassals and Renato Vianna for explaining their beautiful work \cite{rr} to me.  

I am extremely grateful to the Institute for Advanced Study, the Minerva Research Foundation, and the NSF, under agreement DMS 1711976, for their support.  

\section{Proof of the main theorem}

We now explain the proof of the main theorem.

\subsection{Outline of the argument}
\label{sec:outline}

We beginning by explaining the basic idea behind the argument.  

It is already known that for fixed $b$, if $a$ is sufficiently large then the function $c_b(a)$ is given by the volume obstruction, by \cite[Thm. 1.3]{hind}.  It was also recently proved in \cite[Prop. 2.1]{tara} that away from a limit of distinct singular points, $c_b(a)$ is piecewise linear.  So, we only have to understand whether or not infinitely many singular points can occur.

 In \S\ref{sec:where} we apply a recent theorem by the author and Holm, Mandini, and Pires to find a unique point $a_0$, determined by $b$, where singular points must accumulate if infinitely many of them exists.  Next, we show in \S\ref{sec:small} and \S\ref{sec:bound} that for all but $4$ values of $b$, this number $a_0$ is small enough that one can understand enough about $c_b(a)$ for $1 \le a \le a_0+ \epsilon $ to rule out the possibility of infinitely many singular points around $a_0$.  The part of the argument in \S\ref{sec:bound} uses the theory of ``embedded contact homology" (ECH) capacities, which we explain there, while the part of the argument in \S\ref{sec:small} is completely elementary.

Three of the four possible values for $b$ from above correspond to the $1, 2,$ and $3/2$ cases, where an infinite staircase in fact exists.  The fourth value corresponds to $b = 4/3$; this turns out to be a delicate and interesting case, which we treat separately in \S\ref{sec:43}; our proof here also uses ECH capacities, together with a powerful theorem by McDuff \cite{m} stating that these capacities give sharp obstructions to ellipsoid embeddings.  The proof of Theorem~\ref{thm:main} is then given in \S\ref{sec:proof}.

\subsection{Computing the accumulation point}
\label{sec:where}

In \cite{tara}, the author and collaborators show that for a large class of symplectic $4$-manifolds, any infinite staircase must accumulate at a unique point characterized as a solution to a certain quadratic equation.   We will want to use these results here, to find this accumulation point.  We begin by summarizing the relevant mathematics, in the special case of ellipsoids.

Any rational ellipsoid $E(1,p/q)$ has a {\em negative weight sequence} 
\[ (w; w_1,\ldots,w_k),\] 
defined by the procedure in \cite[\S 2]{concaveconvex}.  The weights can be read off from the triangle $\Delta_{1,p/q}$, with vertices $(0,0), (1,0)$ and $(0,p/q);$ this should be regarded as the ``moment polytope" of the ellipsoid.   

More precisely, the number $w$ is the smallest real number such that $\Delta_{1,p/q} \subset \Delta_{w,w}$: so, in this case, we have $w = p/q$.  To find the $w_i$, we look at the complement of $\Delta_{1,p/q}$ in $\Delta_{p/q,p/q}$.  This is itself a triangle, which is affine equivalent to a right triangle $\Delta^{(1)}$ with legs on the axes.  The $w_i$ are then given as follows.  We take $w_1$ to be the largest number such that $\Delta_{w_1,w_1} \subset \Delta^{(1)}$; then, if this inclusion is not surjective, we look at the complement of $\Delta_{w_1,w_1}$ in $\Delta^{(1)}$, which is itself a triangle affine equivalent to a right triangle $\Delta^{(2)}$ with legs on the axes; we then take $w_2$ to be the largest number such that $\Delta_{w_2,w_2} \subset \Delta^{(2)}$ and iterate until the complement of $\Delta_{w_k,w_k}$ in $\Delta^{(k)}$ is empty.  For the details, see \cite{concaveconvex}.

We remark that the $w_1, \ldots w_k$ as described above are also called the {\em weight sequence} of the triangle $\Delta^{(1)}$.    

We now define
\[ per(E(1,p/q)) = 3w - \sum w_i, \quad vol(E(1,p/q)) = w^2 - \sum w^2_i ,\]
where $(w;w_1,\ldots,w_k)$ is the negative weight sequence.  The term $vol(E(1,p/q))$, which we denote by $vol$ for short, is the volume of $E(1,p/q),$ appropriately normalized; the term $per(E(1,p/q))$, which we denote by $per$, should be regarded as the perimeter.

We now have the following, from \cite{tara}.

\begin{theorem}(\cite[Thm. 1.10]{tara}, in the special case of an ellipsoid)
\label{thm:key}
Let $b$ be a rational number.  Then, if the ellipsoid embedding function $c_b(a)$ has infinitely many singular points, they must accumulate at $a_0$, the unique solution to
\begin{equation}
\label{eqn:keyequation}
a^2 - \left(\frac{per^2}{vol} - 2\right) a + 1 = 0,
\end{equation}
with $a_0 \ge 1$.  Moreover, $c_b(a) = \sqrt{a/b}.$
\end{theorem}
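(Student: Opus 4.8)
The plan is to invoke the general machinery of \cite{tara} specialized to ellipsoids, and then to compute explicitly the quantities $per$ and $vol$ attached to $E(1,b)$ from its negative weight sequence. The result of \cite[Thm. 1.10]{tara} applies to a large class of symplectic four-manifolds, of which rational ellipsoids form a subclass; the content of Theorem~\ref{thm:key} is mostly to identify, in this special case, the objects appearing in the general statement (the target manifold is the blowup of $\mathbb{CP}^2$ associated to the moment polytope $\Delta_{1,b}$ via its negative weight decomposition) and to check the hypotheses of the general theorem hold here. So the first step is to recall from \cite{concaveconvex} that the rational ellipsoid $E(1,p/q)$ has a well-defined negative weight sequence $(w; w_1, \dots, w_k)$ obtained by the iterated triangle-removal procedure described above, and to recall that the associated symplectic four-manifold — obtained by blowing up the ball $B^4(w)$ along disjoint balls of capacities $w_1, \dots, w_k$ — satisfies the hypotheses under which \cite[Thm. 1.10]{tara} guarantees a unique accumulation point for singular points of the embedding function.

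Second, I would quote the general theorem to extract equation~\eqref{eqn:keyequation}. In \cite{tara}, the accumulation point is characterized as the unique solution $\geq 1$ of a quadratic whose coefficients are expressed through the "perimeter" and "volume" of the target; the point is that for a blowup of $\mathbb{CP}^2$ these are computed from the homology class data, and under the weight-sequence correspondence this data is exactly $(w; w_1,\dots,w_k)$, so $per = 3w - \sum w_i$ (the pairing of the anticanonical class with the symplectic class) and $vol = w^2 - \sum w_i^2$ (the self-pairing of the symplectic class). Substituting these into the general quadratic gives precisely $a^2 - \left(\frac{per^2}{vol} - 2\right)a + 1 = 0$. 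The discriminant condition guaranteeing a real root $\geq 1$ — equivalently $per^2 \geq 4\,vol$ — should follow from the isoperimetric-type inequality that holds for these polygons, or more simply can be cited from \cite{tara} as part of the same theorem.

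Third, the final sentence $c_b(a_0) = \sqrt{a_0/b}$: the general theorem of \cite{tara} asserts that at the accumulation point the embedding function always equals the volume obstruction, and for the ellipsoid target $E(\lambda, \lambda b)$ the volume obstruction for $E(1,a) \hookrightarrow E(\lambda, \lambda b)$ is $\sqrt{a/b}$ (volumes are $a/2$ and $\lambda^2 b/2$ respectively, up to the standard normalization). So this is again just a matter of unwinding the general statement in the ellipsoid case.

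The main obstacle I expect is not any calculation here — all of these are routine once the dictionary is set up — but rather making sure the hypotheses of \cite[Thm. 1.10]{tara} genuinely apply: one must check that the blowup of $\mathbb{CP}^2$ coming from the negative weight sequence of an arbitrary rational ellipsoid falls within the class of manifolds treated there (e.g. that the relevant symplectic cone / positivity conditions are met, independent of how long or irregular the weight sequence is). Since the paper presents Theorem~\ref{thm:key} as a direct specialization ("in the special case of an ellipsoid"), I anticipate the proof is short: verify the correspondence between $E(1,p/q)$ and its weighted blowup, confirm the hypotheses, and then read off \eqref{eqn:keyequation} together with the final identity from the cited theorem.
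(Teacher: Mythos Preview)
Your proposal is correct in approach and matches the paper's treatment, which is simply to cite \cite[Thm.~1.10]{tara} without further argument: the paper gives no proof of this theorem at all, presenting it purely as a specialization of the general result to the ellipsoid case. Your outline of the dictionary (negative weight sequence $\leftrightarrow$ blowup of $\mathbb{CP}^2$, $per = 3w - \sum w_i$ as the anticanonical pairing, $vol = w^2 - \sum w_i^2$ as the symplectic self-pairing) and the hypothesis check is exactly what would be required to make the specialization explicit, but the paper defers all of this to \cite{tara} and \cite{concaveconvex}. One minor remark: the explicit computation of $per$ and $vol$ for $E(1,k/l)$ that you sketch does appear in the paper, but not as part of this theorem --- it is carried out in the proof of the subsequent Lemma~\ref{prop:keyprop}, where the abstract quantities in \eqref{eqn:keyequation} are evaluated to obtain the closed-form accumulation point.
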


The above theorem can be used here to prove the following key lemma.

Recall that the $a$ values for the Fibonacci staircase terminated at $a = \tau^4$.  We now define an analogue of $\tau^4$ that varies with $b$.  Assume now that $b=k/l$.  The analogue of $\tau^4$ is defined implicitly by the following lemma.

\begin{lemma}
\label{prop:keyprop}
Fix $b=k/l$.  Then, if the graph of $c(a,b)$ has infinitely many nonsmooth points, they must accumulate at 
\[ a_0 = \frac{k}{l}\left(\frac{k+l+1+\sqrt{(k+l+1)^2 - 4kl}}{2k}\right)^2,\]
and $c_b(a_0) = \sqrt{a_0/b}.$
\end{lemma}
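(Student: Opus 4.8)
The plan is to make Theorem~\ref{thm:key} completely explicit: one computes $vol$ and $per$ for the ellipsoid $E(1,k/l)$ as functions of $k$ and $l$, substitutes them into \eqref{eqn:keyequation}, and solves the resulting quadratic, taking the root $\ge 1$. Throughout assume $b=k/l$ is in lowest terms. When $b=1$ the negative weight sequence of $E(1,1)$ is $(1;\,)$ with no $w_i$, so $vol=1$ and $per=3$, and one checks by hand that the asserted formula collapses to the McDuff--Schlenk value $a_0=\tau^4$; so assume $k>l$ from now on.

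The real content is the computation of $per$. Since $vol(E(1,k/l))$ is by definition the normalized volume, $vol=k/l$. For $per=3\cdot\tfrac{k}{l}-\sum_i w_i$, where $(k/l;\,w_1,\dots,w_N)$ is the negative weight sequence of $E(1,k/l)$, I would argue as follows. By the procedure recalled in \S\ref{sec:where}, $(w_1,\dots,w_N)$ is the weight sequence of $\Delta^{(1)}$, the triangle affine equivalent to the complement of $\Delta_{1,k/l}$ in $\Delta_{k/l,k/l}$; a routine affine normalization (straightening the unimodular vertex of that complement lying on the $x$-axis onto the axes) shows $\Delta^{(1)}$ is the right triangle with legs $\tfrac{k-l}{l}$ and $\tfrac{k}{l}$, i.e.\ $\tfrac{k-l}{l}$ times the right triangle $T_m$ with legs $1$ and $m$, where $m=\tfrac{k}{k-l}$ (a reduced fraction since $\gcd(k,k-l)=\gcd(k,l)=1$). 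Writing $\mathbf{w}(m)=(u_1,\dots,u_N)$ for the weight sequence of $T_m$ and using that the weight procedure is homogeneous, $w_i=\tfrac{k-l}{l}\,u_i$. Now I would establish the linear identity
\[ \Sigma(p/q)\eqdef\sum_i\mathbf{w}(p/q)_i=\frac{p+q-1}{q},\qquad p\ge q\ge 1,\ \gcd(p,q)=1, \]
by induction on $q$: the standard recursion $\mathbf{w}(p/q)=\bigl(1^{\times a},\,\tfrac{r}{q}\,\mathbf{w}(q/r)\bigr)$ for $p=aq+r$, $0<r<q$ (read off from the weight procedure for $T_{p/q}$ — inscribe $\Delta_{1,1}$ repeatedly) gives $\Sigma(p/q)=a+\tfrac{r}{q}\,\Sigma(q/r)$, the inductive step is the one-line check $a+\tfrac{r}{q}\cdot\tfrac{q+r-1}{r}=\tfrac{aq+q+r-1}{q}=\tfrac{p+q-1}{q}$, and the base case $q=1$ is $\Sigma(n)=n$. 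Applied to $m=\tfrac{k}{k-l}$ this yields $\sum_i w_i=\tfrac{k-l}{l}\cdot\tfrac{2k-l-1}{k-l}=\tfrac{2k-l-1}{l}$, hence
\[ per=\frac{3k}{l}-\frac{2k-l-1}{l}=\frac{k+l+1}{l}. \]
(Alternatively one can read off $per(E(1,p/q))=\tfrac{p+q+\gcd(p,q)}{q}$ as the normalized count of boundary lattice points of the integral moment triangle $\Delta_{q,p}$; for $\gcd(p,q)=1$ this is the same answer.)

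With $vol=\tfrac{k}{l}$ and $per=\tfrac{k+l+1}{l}$ the rest is algebra. Dividing \eqref{eqn:keyequation} by $a_0$ gives $a_0+a_0^{-1}=\tfrac{per^2}{vol}-2$, so $\bigl(\sqrt{a_0}+1/\sqrt{a_0}\bigr)^2=\tfrac{per^2}{vol}$, whence $\sqrt{a_0}+1/\sqrt{a_0}=\tfrac{per}{\sqrt{vol}}=\tfrac{k+l+1}{\sqrt{kl}}$. Solving this quadratic in $\sqrt{a_0}$ and taking the root $\ge 1$ --- which exists and strictly exceeds $1$ since $(k+l+1)^2-4kl=(k-l)^2+2(k+l)+1>0$ --- gives $\sqrt{a_0}=\tfrac{k+l+1+\sqrt{(k+l+1)^2-4kl}}{2\sqrt{kl}}$; squaring and rewriting $\tfrac{1}{4kl}=\tfrac{k}{l}\cdot\tfrac{1}{4k^2}$ produces exactly the stated value of $a_0$. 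Finally $c_b(a_0)=\sqrt{a_0/b}$ is the last assertion of Theorem~\ref{thm:key}. The only place requiring genuine care is the evaluation of $per$ --- the affine normalization of $\Delta^{(1)}$ and the identity $\Sigma(p/q)=\tfrac{p+q-1}{q}$ --- but this is elementary, and there is no real obstacle once Theorem~\ref{thm:key} is granted.
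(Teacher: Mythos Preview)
Your argument is correct and follows essentially the same route as the paper's proof: both apply Theorem~\ref{thm:key}, identify $\Delta^{(1)}$ as the $\left(\tfrac{k-l}{l},\tfrac{k}{l}\right)$ right triangle, compute $per=\tfrac{k+l+1}{l}$ and $vol=\tfrac{k}{l}$, and solve the resulting quadratic in $\sqrt{a_0}$. The only cosmetic differences are that the paper quotes the weight-sum identity $\sum_i a_i = p/q+1-1/q$ from McDuff--Schlenk \cite[Lem.~1.2.6]{ms} rather than re-deriving it by your continued-fraction induction, and the paper passes through the substitution $a'=\tfrac{l}{k}a$ in the final algebra instead of completing the square directly; neither changes the substance.
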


For the benefit of the reader, we connect with the Fibonacci staircase by noting that if $k=l=1$, then 
\[ \frac{k}{l}\left(\frac{k+l+1+\sqrt{(k+l+1)^2 - 4kl}}{2k}\right)^2=\tau^4.\]
We will call $\frac{k}{l}(\frac{k+l+1+\sqrt{(k+l+1)^2 - 4kl}}{2k})^2$ the {\em accumulation point}.

\begin{proof}

By Theorem~\ref{thm:key}, the accumulation point $a_0$ must occur at the unique solution to \eqref{eqn:keyequation} that is at least one, and we must have $c_b(a_0) = \sqrt{a_0/b}.$  

To compute $a_0$ explicitly, we need to compute the terms $per$ and $vol$.  We already computed above that $w = k/l.$  Next, we compute 
\[ \Delta^{(1)} = \Delta_{k/l - 1, k/l}.\]
As mentioned above, the remaining weights $w_i$ can be interpreted as the weight sequence for $\Delta_{k/l-1,k/l}$.  We now apply a result of McDuff-Schlenk from \cite{ms}; specifically, in \cite[Lem. 1.2.6]{ms}, it is shown that for any $\Delta_{1,p/q}$ with $p/q$ in lowest terms, the weight sequence $(a_1,\ldots,a_k)$ satisfies
\[ \sum_i a_i = p/q + 1 - 1/q, \quad \sum_i a^2_i = p/q.\]
In the present situation, then,
we find
\[ per = \frac{3k}{l} - \left(\frac{k}{l} - 1\right) \left( \frac{k}{k-l} + 1 - \frac{1}{k-l}\right) = \frac{k+l+1}{l},\]
and
\[ vol = (k/l)^2 - \left(\frac{k}{l} -1\right)^2\frac{k/l}{k/l - 1} = \frac{k}{l}.\]

It is now convenient to use another version of \eqref{eqn:keyequation}.  That is, it is shown in \cite{tara} that the solutions to \eqref{eqn:keyequation} are the same as the solutions to
\[ a + 1 - \sqrt{a \cdot \frac{per^2}{vol}} = 0.\]
Plugging in for $per$ and $vol$ from above, we therefore get
\[ a + 1 - (k+l+1) \sqrt{ \frac{a}{kl} } = 0.\]
Thus, we see that $a' = \frac{l}{k} a $ satisfies
\[ k a' - (k+l+1)\sqrt{a'} + l = 0,\]
hence the result.

\end{proof}

\subsection{The accumulation point is usually small}
\label{sec:small}

We now collect some elementary arguments to show that for most $k$ and $l$, 
\[\frac{k}{l}\left(\frac{k+l+1+\sqrt{(k+l+1)^2 - 4kl}}{2k}\right)^2,\]
is quite small.

\begin{lemma}
\label{lem:nicebound}  Assume that $l \ne 1$, and assume that $(k,l) \not \in \lbrace(3,2), (5,2), (4,3), (5,3), (5,4) \rbrace$.  Then
\[ \frac{k}{l}\left(\frac{k+l+1+\sqrt{(k+l+1)^2 - 4kl}}{2k}\right)^2 < \frac{k+l+1}{l}.\]
\end{lemma}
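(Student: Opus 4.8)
The plan is to reduce the claimed inequality to an explicit algebraic condition on $k$ and $l$ that can be checked by hand on the finitely many remaining small cases. Write $s = k + l + 1$ and $D = s^2 - 4kl$, so the accumulation point is $a_0 = \frac{k}{l}\cdot\left(\frac{s+\sqrt{D}}{2k}\right)^2$, and the right-hand side of the desired inequality is $\frac{s}{l}$. Clearing the common factor $\frac{1}{l}$ and then multiplying by $4k$, the inequality $a_0 < \frac{s}{l}$ becomes $\frac{(s+\sqrt{D})^2}{4k} < s$, i.e. $(s + \sqrt{D})^2 < 4ks$. First I would note that $D \ge 0$ is automatic here: since $l \ge 2$ and $k \ge l$ (we may assume $k \ge l$ throughout, as $b \ge 1$), we have $s = k+l+1 \le k + k + 1 \le 2k + l$ hmm — more carefully, $D = (k+l+1)^2 - 4kl = (k-l)^2 + 2(k+l) + 1 > 0$, so $\sqrt{D}$ is real and in fact $\sqrt{D} > k - l$.

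Next I would isolate the square root. Expanding, $(s+\sqrt D)^2 = s^2 + D + 2s\sqrt D = 2s^2 - 4kl + 2s\sqrt D$, so the inequality $(s+\sqrt D)^2 < 4ks$ is equivalent to $2s\sqrt D < 4ks - 2s^2 + 4kl = 2s(2k - s) + 4kl = 2s(k - l - 1) + 4kl$. Dividing by $2$, we need $s\sqrt D < s(k-l-1) + 2kl$. The right side is positive: $s(k-l-1)$ could be negative only when $k = l$ (giving $-s$), but then $2kl = 2k^2$ dominates $s = 2k+1$ for $k \ge 2$; and when $k > l$ it is manifestly positive. So both sides are positive and I can square. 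After squaring and substituting $D = s^2 - 4kl$, the inequality $s^2 D < \big(s(k-l-1)+2kl\big)^2$ becomes a polynomial inequality in $k, l$; I expect the $s^4$ terms to cancel (since $D$ has leading term $s^2$ and the right side has leading term $s^2(k-l-1)^2$, and one checks these are compatible after expansion), leaving a lower-degree polynomial inequality. The main obstacle will be doing this expansion cleanly and recognizing the resulting polynomial as positive under the stated hypotheses; I anticipate it factors or simplifies to something like a positive combination of monomials in $k$ and $(k-l)$ once $l \ge 2$ and the five listed exceptional pairs (plus the boundary $(k,l)$ with $k$ close to $l$) are excluded.

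Concretely, after the squaring step I would treat $k$ as the large variable with $l \ge 2$ fixed: the reduced polynomial inequality should hold for all $k$ beyond some explicit threshold depending on $l$ (roughly, the listed exceptions are exactly the finitely many $(k,l)$ with $2 \le l \le 4$ and $k$ just above $l$ where the inequality fails or is an equality), and then a short finite check disposes of the remaining small pairs. So the structure is: (1) clear denominators and reduce to $(s+\sqrt D)^2 < 4ks$; (2) isolate and square the radical to get a polynomial inequality $P(k,l) > 0$; (3) show $P(k,l) > 0$ for $l \ge 2$ and $k$ large relative to $l$ by exhibiting $P$ as an evidently-positive expression; (4) check by hand the finitely many remaining pairs $(k,l)$ with small $k - l$, confirming that the only failures are exactly $(3,2), (5,2), (4,3), (5,3), (5,4)$. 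The only genuinely delicate point is step (3)–(4): getting the polynomial into a manifestly positive form and making sure the list of exceptions is exactly right rather than missing a borderline case like $(5,2)$ or $(5,4)$; I would double-check those boundary pairs numerically against both sides of the original inequality.
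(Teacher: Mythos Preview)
Your approach is correct and genuinely different from the paper's. Carrying your reduction through (which you stop short of doing), after squaring one obtains, upon using $s^2 - (k-l-1)^2 = 4k(l+1)$ and dividing by $4k$, the clean polynomial inequality
\[
(k+l+1)^2 \;<\; l(k+l+1)(k-l-1) + kl^2,
\]
or, setting $m = k-l \ge 1$,
\[
(l-1)\,m^2 + (3l^2-4l-2)\,m + (l^3 - 6l^2 - 5l - 1) \;>\; 0.
\]
For each fixed $l\ge 2$ this is a quadratic in $m$ with positive leading coefficient $l-1$, so it holds for $m$ large; one then checks directly that the only failures with $l\ge 2$ and $\gcd(k,l)=1$ are exactly the five listed pairs. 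So your plan works; you should actually perform the expansion rather than write ``I expect the $s^4$ terms to cancel,'' and note that under the ambient hypotheses $k>l$ (coprimality plus $l\neq 1$), so the $k=l$ discussion is unnecessary.

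The paper takes a different route: instead of squaring, it bounds the radical from above by a well-chosen linear function, proving $(k+l+1)^2 - 4kl \le (k - l/4 - 2/5)^2$ in two overlapping regimes ($l\ge 7$, and $l\ge 3$ with $k\ge l+6$), then substitutes this bound into the left side and checks the resulting rational inequality; the case $l=2$ requires a separate linear bound $k-1/4$, and eleven residual pairs are verified by hand. Your method is more systematic and avoids guessing the magic constant $-l/4 - 2/5$; the paper's method keeps the degrees lower at each step and makes the ``large $l$'' and ``large $k$'' regimes visibly separate, at the cost of more case analysis. Either way the endgame is a short finite check, and yours arguably gets there with less scaffolding.
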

\begin{proof}  

{\em Step 1.}  Here we prove the following claim.

\begin{claim}
\label{clm:theclaim}
If $l \ge 7$ and $k \ne l$, then $(k+l+1)^2 - 4kl \le (k-l/4-2/5)^2$.
\end{claim}

\begin{proof}[Proof of claim]
We know that
\[ (k-l/4-2/5)^2 = k^2-kl/2-4k/5+l/5+l^2/16+4/25.\]
We also know that
\[ (k+l+1)^2 - 4kl = k^2+l^2+1-2kl+2k+2l.\]
Hence, the claim is true if and only if
\[ \frac{15}{16}l^2-k\left(\frac{3}{2}l-14/5\right)+\frac{9}{5}l + 21/25 \le 0.\]
We know that $\frac{3}{2}l - \frac{14}{5} > 0$ (since $l \ge 2$).  We also know that  $k \ge l+1$.  Hence, we know that 
\begin{align*}
\frac{15}{16}l^2-k\left(\frac{3}{2}l-14/5\right)+\frac{9}{5}l + 21/25 & \le \frac{15}{16}l^2-(l+1)\left(\frac{3}{2}l-14/5\right)+\frac{9}{5}l + 21/25 \\
                                                      & = -\frac{9}{16} l^2 + \frac{31}{10}l+91/25. 
\end{align*}

The larger of the two roots of $-\frac{9}{16} l^2 + \frac{31}{10}l+91/25$ is smaller than seven.  So, since if $l \ge 7,$ 
\[ -\frac{9}{16} l^2 + \frac{31}{10}l+91/25 < 0,\]
the result follows.

\end{proof}

{\em Step 2.}  Claim~\ref{clm:theclaim} is very useful when $l \ge 7$.  We need a slightly different version of this claim to handle most of the other $l$.

\begin{claim}
\label{clm:theclaim1}
If $l \ge 3$ and $k \ge l+6$, then $(k+l+1)^2 - 4kl \le (k-l/4-2/5)^2$.
\end{claim}

\begin{proof}[Proof of claim]
From the proof of Claim~\ref{clm:theclaim}, we know that Claim~\ref{clm:theclaim1} is true if and only if 
\[\frac{15}{16}l^2-k\left(\frac{3}{2}l-14/5\right)+\frac{9}{5}l + 21/25 \le 0.\]
We know that $(\frac{3}{2} l - 14/5) > 0$.  We also know that $k \ge l+6.$  Hence
\begin{align*}
\frac{15}{16}l^2-k\left(\frac{3}{2}l-14/5\right)+\frac{9}{5}l + 21/25 & \le \frac{15}{16}l^2-(l+6)\left(\frac{3}{2}l-14/5\right)+\frac{9}{5}l + 21/25 \\
                                                      & = \frac{-1}{400}\left(225l^2+1760l-7056\right). 
\end{align*}
Since if $l \ge 3,$ 
\[ 225l^2+1760l-7056 > 0,\]
the result follows.

\end{proof}

{\em Step 3.}  Using these two claims, we can now take care of almost every case. 

More precisely, in this step, assume that either $l \ge 7$, or $l \ge 3$ and $k \ge l+ 6$.
Then by Claim~\ref{clm:theclaim} and Claim~\ref{clm:theclaim1}, we know that
\begin{align*}
\frac{k}{l}\left(\frac{k+l+1+\sqrt{(k+l+1)^2 - 4kl}}{2k}\right)^2 & \le \frac{k}{l}\left(\frac{k+l+1+\sqrt{(k-l/4-2/5)^2}}{2k}\right)^2 \\
                                                                                & = \frac{(2k+\frac{3}{4}l+3/5)^2}{4kl} \\
                                                                                & = \frac{4k^2+3kl+\frac{12}{5}k+\frac{9}{16}l^2+9/25+\frac{9}{10}l}{4kl} \\
                                                                                & = \frac{1}{l}\left(k+3l/4+12/20 + \frac{9}{64} \frac{l}{k}l+\frac{9}{60k}+\frac{9l}{40k}\right).
\end{align*}

We know that $k \ge 1,$ and $l/k \le 1$.  Hence, we know that 
\[ \left(k+3l/4+12/20 + \frac{9}{64} \frac{l}{k}l+\frac{9}{60k}+\frac{9l}{40k}\right) \le k +l + 1.\]

This completes the proof of Lemma~\ref{lem:nicebound} in the case where $l \ge 7$, or $l \ge 3$ and $k \ge l +6$.

{\em Step 4.}  Now assume that $l=2$ and let $k \ge 8$; we will now prove Lemma~\ref{lem:nicebound} in this case.

As $k \ge 6$, we know that
\[ k^2 -2k + 9 < (k-1/4)^2.\]
We therefore know that
\begin{align*} \frac{k}{l}\left(\frac{k+l+1+\sqrt{(k+l+1)^2 - 4kl}}{2k}\right)^2 & = \frac{k}{l}\left(\frac{k+3+\sqrt{(k+3)^2 - 8k}}{2k}\right)^2 \\
                                                                                 &  \le \frac{1}{l} \frac{(k + 3 + k - 1/4)^2}{4k} \\
                                                                                 & = \frac{1}{l} \frac{4k^2 + 11k + 121/16}{4k} \\
                                                                                 & = \frac{1}{l}\left(k+11/4 + \frac{121}{64k}\right) \\
                                                                                 & \le \frac{1}{l} (k + 3),
\end{align*}
where, in the last inequality, we have used the fact that $k \ge 8$.  Thus, Lemma~\ref{lem:nicebound} holds in this case as well.

{\em Step 5.}  The previous steps have proved Lemma~\ref{lem:nicebound} under the assumption that $l \ge 7$, or $l \ge 3$ and $k \ge l+6$, or $l = 2$ and $k \ge 8$. 

Thus, it remains to check Lemma~\ref{lem:nicebound} in the following cases: 
\begin{align*}
(k,l) & \in \lbrace (7,2), (7,3),(8,3),(7,4),(9,4), \\
& (6,5),(7,5),(8,5),(9,5),(7,6),(11,6) \rbrace.
\end{align*}
We can compute directly that Lemma~\ref{lem:nicebound} holds for these as well.   
\end{proof}

\subsubsection{Rounding up the non-integral stragglers}

We can deal with the $(5,2), (5,3)$ and $(5,4)$ cases
by using the following simple fact:

\begin{claim}
\label{clm:exceptionalclaim}
If $b=(k,l) \in \lbrace (5,2), (5,3), (5,4) \rbrace$, then 
\[ \frac{k}{l}\left(\frac{k+l+1+\sqrt{(k+l+1)^2 - 4kl}}{2k}\right)^2 < b(\lfloor b \rfloor + 2)^2/(\lfloor b \rfloor + 1)^2.\]
\end{claim}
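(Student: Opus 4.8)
The claim concerns only three explicit pairs $(k,l) \in \{(5,2),(5,3),(5,4)\}$, so the plan is simply to verify the stated inequality by direct computation in each case, exactly as the excerpt says ("We can compute directly..."). For each pair I would first compute the left-hand side, i.e. the accumulation point $\frac{k}{l}\left(\frac{k+l+1+\sqrt{(k+l+1)^2-4kl}}{2k}\right)^2$, and then compute the right-hand side $b(\lfloor b\rfloor+2)^2/(\lfloor b\rfloor+1)^2$, and compare. Since $b = k/l > 2$ in all three cases and $b<3$, we have $\lfloor b\rfloor = 2$, so the right-hand side is always $\frac{k}{l}\cdot\frac{16}{9}$. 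Thus the inequality to check reduces to
\[
\left(\frac{k+l+1+\sqrt{(k+l+1)^2-4kl}}{2k}\right)^2 < \frac{16}{9},
\]
or equivalently $k+l+1+\sqrt{(k+l+1)^2-4kl} < \frac{8k}{3}$.

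For the three cases: when $(k,l)=(5,2)$ we have $k+l+1 = 8$ and $(k+l+1)^2-4kl = 64-40 = 24$, so the left side is $8+\sqrt{24} < 8 + 5 = 13 < \frac{40}{3} \approx 13.33$; when $(k,l)=(5,3)$ we have $k+l+1=9$ and $81-60=21$, so $9+\sqrt{21} < 9+4.6 = 13.6 < \frac{40}{3}\approx 13.33$ — here one must be a bit more careful since $\sqrt{21}\approx 4.583$ gives $13.583$, which actually \emph{exceeds} $13.33$, so this case would need rechecking, perhaps the intended bound uses $\lfloor b\rfloor$ differently or the inequality is genuinely tight and I have mis-transcribed; when $(k,l)=(5,4)$ we have $k+l+1=10$ and $100-80=20$, so $10+\sqrt{20}<10+4.48 = 14.48 < \frac{40}{3}\approx 13.33$ — again this seems to fail, so I would need to re-examine the floor values (note $b=5/4 < 2$ here! so $\lfloor b\rfloor = 1$ and the RHS is $\frac{k}{l}\cdot\frac{9}{4}$, not $\frac{16}{9}$).

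This points to the real subtlety: the value of $\lfloor b\rfloor$ differs across the three cases. For $(5,2)$ and $(5,3)$ one has $b\in(2,3)$ so $\lfloor b\rfloor = 2$ and the RHS factor is $16/9$; for $(5,4)$ one has $b = 5/4 \in (1,2)$ so $\lfloor b\rfloor = 1$ and the RHS factor is $9/4$. So the correct plan is: split into these two sub-cases according to the integer part of $b$, reduce each to a numerical inequality of the form $\bigl(\tfrac{k+l+1+\sqrt{(k+l+1)^2-4kl}}{2k}\bigr)^2 < (\lfloor b\rfloor+2)^2/(\lfloor b\rfloor+1)^2$, and verify it with an explicit rational estimate for the square root (e.g. bound $\sqrt{(k+l+1)^2-4kl}$ above by a convenient rational and check the resulting polynomial inequality in integers).

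The main obstacle is purely bookkeeping: being careful with the floor function and with sharp-enough rational bounds on the three square roots $\sqrt{24}$, $\sqrt{21}$, $\sqrt{20}$, since at least the $(5,3)$ case appears to be reasonably tight and a crude bound on $\sqrt{21}$ will not suffice. Concretely, for $(5,3)$ I would use $\sqrt{21} < \frac{14}{3}$ (since $196/9 = 21.78 > 21$), giving $9 + \sqrt{21} < 9 + \frac{14}{3} = \frac{41}{3} < \frac{40}{3}$ — wait, that is false, so one needs the \emph{exact} comparison: is $(9+\sqrt{21})^2 < \frac{1600}{9}$, i.e. is $81 + 18\sqrt{21} + 21 = 102 + 18\sqrt{21} < 177.78$, i.e. $\sqrt{21} < \frac{75.78}{18} = 4.21$? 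That is false. So in fact for $(5,3)$ the inequality as I've set it up fails, which means I have likely mistranscribed the claim's right-hand side or the claim intends a different normalization; the honest plan is to recompute the intended RHS from context (it should be the quantity that in \S\ref{sec:bound} plays the role of an a-priori endpoint for the linear/volume behavior, namely $b\cdot\bigl(\frac{\lceil b\rceil + 1}{\lceil b\rceil}\bigr)^2$ or similar) and then verify the three cases against the correct bound by the same rational-square-root estimates. Once the correct statement is pinned down, the verification for the three explicit pairs is a short finite computation.
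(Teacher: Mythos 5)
Your approach---direct numerical verification for each of the three pairs, which is exactly what the paper does---is sound, but your writeup is derailed by a persistent arithmetic slip: you repeatedly place $b = 5/3$ in the interval $(2,3)$. In fact $5/3 \approx 1.667$, so $\lfloor 5/3 \rfloor = 1$, not $2$. Thus the only pair with $\lfloor b\rfloor = 2$ is $(5,2)$; both $(5,3)$ and $(5,4)$ have $\lfloor b\rfloor = 1$, giving the right-hand-side factor $(\lfloor b\rfloor+2)^2/(\lfloor b\rfloor+1)^2 = 9/4$ rather than $16/9$. Your ``failure'' in the $(5,3)$ case is an artifact of this miscomputation, and the subsequent speculation that the claim must be mistranscribed is unwarranted.

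With the floor values corrected, the three checks all go through cleanly. For $(5,2)$ (where $\lfloor b\rfloor = 2$): one needs $8+\sqrt{24} < 40/3$, i.e. $\sqrt{24} < 16/3$, i.e. $24 < 256/9 \approx 28.4$, true. For $(5,3)$ (where $\lfloor b\rfloor = 1$): one needs $9+\sqrt{21} < 3k = 15$, i.e. $\sqrt{21} < 6$, i.e. $21 < 36$, true. For $(5,4)$ (where $\lfloor b\rfloor = 1$): one needs $10+\sqrt{20} < 15$, i.e. $\sqrt{20} < 5$, i.e. $20 < 25$, true. In each case the square root is bounded by a rational comfortably enough that no delicate estimates are required. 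The paper simply says ``verified by direct computation,'' which is exactly this calculation; your plan would reproduce it once you fix the floor of $5/3$.
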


\begin{proof}  This is verified by direct computation.
\end{proof}

\subsubsection{The integral case}

It is easy to see that Lemma~\ref{lem:nicebound} is not true when $l = 1$.  However, the following is true:

\begin{lemma}
\label{lem:niceintegralbound}
Let $k \ge 3$ and let $l = 1$. 
Then 
\[  \frac{k}{l}\left(\frac{k+l+1+\sqrt{(k+l+1)^2 - 4kl}}{2k}\right)^2 < k(k+3)^2/(k+1)^2.\]
\end{lemma}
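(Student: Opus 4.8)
The plan is to reduce the stated inequality, by two successive squarings, to a polynomial inequality that is manifestly true for $k \ge 3$.

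First I would substitute $l = 1$ and simplify. The quantity under the square root becomes $(k+2)^2 - 4k = k^2 + 4$, so the left-hand side of the asserted inequality equals $\frac{(k+2+\sqrt{k^2+4})^2}{4k}$. Since every quantity appearing is positive, multiplying through by $4k$ and taking positive square roots shows that the claimed bound is equivalent to
\[ k+2+\sqrt{k^2+4} < \frac{2k(k+3)}{k+1}. \]
Isolating the radical, this in turn is equivalent to $\sqrt{k^2+4} < \frac{k^2+3k-2}{k+1}$, and the right-hand side is positive for all $k \ge 1$ (its numerator is), so I may square both sides once more.

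Squaring gives the equivalent polynomial inequality $(k^2+4)(k+1)^2 < (k^2+3k-2)^2$. Expanding, the left-hand side is $k^4 + 2k^3 + 5k^2 + 8k + 4$ and the right-hand side is $k^4 + 6k^3 + 5k^2 - 12k + 4$, so their difference is $4k^3 - 20k = 4k(k^2-5)$, which is strictly positive exactly when $k^2 > 5$, in particular whenever $k \ge 3$. This is precisely the hypothesis, so the proof concludes.

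There is essentially no obstacle here; the argument is a routine manipulation, the only point needing (minimal) care being to record the positivity of both sides before each squaring. I would also remark that the same computation shows the bound would fail for $k = 2$ (there $k^2 - 5 < 0$), which is consistent both with the hypothesis $k \ge 3$ and with the observation that Lemma~\ref{lem:nicebound} itself genuinely fails when $l = 1$.
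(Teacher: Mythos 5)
Your proof is correct, and it takes a genuinely cleaner route than the paper's. The paper bounds the radical by the linear approximation $\sqrt{(k+2)^2 - 4k} < k + 1/2$, which is only valid for $k \ge 4$; it then feeds this into the expression and compares $\frac{k+5/4}{k}$ against $\frac{k+3}{k+1}$, and finally handles $k = 3$ by a separate direct computation. Your argument, by contrast, does exact algebra: two squarings reduce the claim to $(k^2+3k-2)^2 - (k^2+4)(k+1)^2 = 4k(k^2-5) > 0$, which holds uniformly for all $k \ge 3$ with no edge case to check. This buys a uniform treatment and, as a bonus, reveals exactly where the inequality fails ($k = 2$, since $k^2 - 5 < 0$ there), which the paper's method does not expose. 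The only thing to record carefully, which you do, is the positivity of both sides before each squaring; in particular noting $k^2 + 3k - 2 > 0$. Both approaches are elementary, but yours is the tighter one.
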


\begin{proof}

First, we have:
If $k \ge 4$, then
\[ (k+2)^2 - 4k < (k+1/2)^2 .\]

We now show that this implies Lemma~\ref{lem:niceintegralbound} for $k \ge 4$.  Indeed, in this case we have
\begin{align*}
\frac{k}{l}\left(\frac{k+l+1+\sqrt{(k+l+1)^2 - 4kl}}{2k}\right)^2 & = k\left(\frac{k+2+\sqrt{(k+2)^2 - 4k}}{2k}\right)^2 \\
         & < k\left(\frac{k+2+k+1/2}{2k}\right)^2  \\
         & = k\left(\frac{k+5/4}{k}\right)^2
\end{align*}
Since 
\[ \frac{k + 5/4}{k} < \frac{k+3}{k+1},
\]
if $k \ge 4$ (in fact, even if $k \ge 2$),
the result follows in this case.

Thus, we need only consider the case where $k = 3$.  But this can be verified by direct computation.

\end{proof}

\subsection{Bounding the graph of $c_b(a)$ from below}
\label{sec:bound}

The aim of this section is to prove the following lemma, which will make use of the estimates on the accumulation point from \S\ref{sec:small}.

\begin{lemma}
\label{lem:punchy}
Assume that $(k,l) \not \in \lbrace (1,1), (2,1), (3,2), (4,3) \rbrace$.  Let 
\[ a \le \frac{k}{l}\left(\frac{k+l+1+\sqrt{(k+l+1)^2 - 4kl}}{2k}\right)^2\]
and assume that $c_b(a)$ is equal to the volume obstruction.  Then 
\begin{equation}
\label{eqn:ineq1}
c_b(x) \ge c_b(a)
\end{equation}
for $x \le a$ sufficiently close to $a$, and 
\begin{equation}
\label{eqn:ineq2} 
c_b(x) \le \frac{x}{a} c_b(a)
\end{equation}
for $a \le x$ close to $a$.  
\end{lemma}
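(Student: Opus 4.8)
The plan is to handle \eqref{eqn:ineq2} by a soft rescaling argument and \eqref{eqn:ineq1} by producing a single embedded contact homology obstruction that is locally constant to the left of $a$ and already realizes all of $c_b(a)$; only the latter uses the hypotheses on $a$ and on $b$.

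For \eqref{eqn:ineq2} I would in fact prove the stronger statement that $c_b(x)\le \tfrac{x}{a}c_b(a)$ for every $x\ge a$. For such $x$ one has the inclusion $E(1,x)\subset E\!\left(\tfrac{x}{a},x\right)=\tfrac{x}{a}E(1,a)$, since $\tfrac{x}{a}\ge 1$; precomposing the $\tfrac{x}{a}$-dilation of an embedding $E(1,a)\to E(\lambda,\lambda b)$ (with $\lambda$ close to $c_b(a)$) with this inclusion gives $E(1,x)\to E\!\left(\tfrac{x}{a}\lambda,\tfrac{x}{a}\lambda b\right)$, and letting $\lambda\to c_b(a)$ finishes it. Note this uses neither the bound on $a$ nor the restriction on $b$.

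For \eqref{eqn:ineq1} I would use ECH capacities $c^{\op{ECH}}_k$: they are monotone under symplectic embeddings, satisfy $c^{\op{ECH}}_k(E(\lambda,\lambda b))=\lambda\,c^{\op{ECH}}_k(E(1,b))$, and for an ellipsoid $E(1,r)$ equal the $(k{+}1)$-st smallest element, counted with multiplicity, of $\{\,m+nr:m,n\in\Z_{\ge 0}\,\}$; hence $c_b(y)\ge c^{\op{ECH}}_k(E(1,y))/c^{\op{ECH}}_k(E(1,b))$ for every $k$ and $y$. The key device: given a positive integer $M$, set $k(M):=\#\{(m,n)\in\Z_{\ge 0}^2:m+na\le M\}-1$. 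Because $M$ is always one of the listed values (take $n=0$), a short bookkeeping check with the multiset $\{m+nx\}$ — tracking the entries with $m+na$ equal to, or just above, $M$ — shows that $M$ stays the $(k(M){+}1)$-st smallest for all $x$ in a left-neighborhood $(a-\epsilon,a]$, i.e.\ $c^{\op{ECH}}_{k(M)}(E(1,x))=M$ there. So on that neighborhood $c_b(x)\ge M/c^{\op{ECH}}_{k(M)}(E(1,b))$, and \eqref{eqn:ineq1} follows the instant I can choose $M$ with $M/c^{\op{ECH}}_{k(M)}(E(1,b))\ge c_b(a)=\sqrt{a/b}$. Applying ECH monotonicity at $y=a$ shows the reverse inequality holds automatically, so what is really needed is that the volume value $\sqrt{a/b}$ is attained by one of these ``integer'' obstructions.

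Producing such an $M$ is the main step and the one I expect to be hardest. Since $c^{\op{ECH}}_1(E(1,r))=1$ forces $c_b(a)\ge 1$, we have $a\in[b,a_0]$ with $a_0$ as in Lemma~\ref{prop:keyprop}. Here the estimates of \S\ref{sec:small} enter decisively: Lemma~\ref{lem:nicebound}, Claim~\ref{clm:exceptionalclaim} and Lemma~\ref{lem:niceintegralbound} bound $a_0$ above by a modest explicit function of $b$ (for instance $a_0<\tfrac{k+l+1}{l}$ in the main case), so $a$ exceeds $b$ only slightly; in this regime only the first few integers $M$ — up to about $\lfloor b\rfloor+2$ — can matter, and for each $a\in[b,a_0]$ I would compute the lattice count $k(M)$ and the spectrum entry $c^{\op{ECH}}_{k(M)}(E(1,b))$ and check the desired inequality directly (the case $M=1$ recovers $c_b\equiv 1$ on $[1,b]$ and disposes of $a=b$; the stragglers $b\in\{5/2,5/3,5/4\}$ and the integral $b$ use their sharper bounds). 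The restriction $b\notin\{1,2,3/2,4/3\}$ is precisely what makes this computation succeed: for those four eccentricities $a_0$ is too large relative to $b$ — this is exactly the genuine infinite staircase — and no finite $M$ works at $a=a_0$. Because both sides of the inequality are close to $1$ on $[b,a_0]$, this final verification is delicate and rests on the sharpness of the \S\ref{sec:small} estimates, which is why I regard it as the crux of the argument.
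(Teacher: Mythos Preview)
Your proposal has a structural mismatch with the paper's proof, rooted in a typo in the statement: the inequality \eqref{eqn:ineq2} should read $c_b(x)\ge \tfrac{x}{a}c_b(a)$, not $\le$. One sees this by looking at how the lemma is actually used in \S\ref{sec:proof}: there the paper combines \eqref{eqn:ineq1} with monotonicity and \eqref{eqn:ineq2} with subscaling to force $c_b$ to be exactly linear on each side of $a_0$; since subscaling already gives $c_b(x)\le\tfrac{x}{a}c_b(a)$, the content of \eqref{eqn:ineq2} must be the reverse inequality. Consistently, the paper's own proof of Lemma~\ref{lem:punchy} derives \emph{both} \eqref{eqn:ineq1} and \eqref{eqn:ineq2} from Lemma~\ref{lem:intlemma}, which only ever produces \emph{lower} bounds on $c_b$. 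So your subscaling argument, while correct for the inequality as literally printed, is proving the trivial direction; the paper establishes the other one.

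For \eqref{eqn:ineq1} your approach is essentially the paper's, but the paper actually carries out the step you flag as ``the crux'' and leave undone. Concretely, the paper does not try to manufacture an $M$ for an arbitrary $a\le a_0$ with $c_b(a)=\sqrt{a/b}$. Instead, Lemma~\ref{lem:intlemma} computes, for the small explicit indices $k\in\{1,\lfloor b\rfloor+1,\lfloor b\rfloor+2,b+3\}$, piecewise constant and piecewise linear ECH lower bounds for $c_b$ on consecutive intervals. These bounds strictly exceed $\sqrt{a/b}$ on their interiors and touch it only at the explicitly listed points $a_1,\dots,a_7$. The estimates of \S\ref{sec:small} are then invoked to show that $[1,a_0]$ is covered by the first few of these intervals (and by the sixth and seventh in the integral case), which forces any $a\le a_0$ with $c_b(a)=\sqrt{a/b}$ to lie in $\{a_1,\dots,a_4,a_6\}$. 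At each such $a_i$ the constant bullet immediately to the left gives \eqref{eqn:ineq1} and the linear bullet immediately to the right gives the intended \eqref{eqn:ineq2}. Your sketch (``for each $a\in[b,a_0]$ I would compute\dots'') elides exactly this two-part structure --- first pinning down the finitely many admissible $a$, then reading off the two adjacent obstructions --- and the verification you defer is precisely the content of Lemma~\ref{lem:intlemma}.
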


To motivate for the reader why this lemma will be useful for us, we remark that we will later show that the inequalities \eqref{eqn:ineq1} and \eqref{eqn:ineq2} can be upgraded to very useful equalities under the assumptions of the lemma, using some general properties of the function $c_b(a)$; we defer this short argument to later in the paper, focusing on the obstructive theory in this section.  

The proof of Lemma~\ref{lem:punchy} will use the theory of ``ECH capacities", defined in \cite{qech}.  The ECH capacities of a symplectic $4$-manifold $(X,\omega)$ are a sequence of nonnegative real numbers
\[ 0 \le c_0(X,\omega) \le \ldots \le c_k(X,\omega) \le \ldots \le \infty\]
that are monotone with respect to symplectic embeddings.  That is, if there is a symplectic embedding
\[ (X_1,\omega_1) \to (X_2,\omega_2),\]
then we must have
\begin{equation}
\label{eqn:obstruct}
c_k(X_1,\omega_1) \le c_k(X_2,\omega_2),
\end{equation}
for all $k$.  Hence, ECH capacities are obstructions to the existence of a symplectic embedding.  ECH capacities are defined using ``embedded contact homology"; for more, see for example the survey article \cite{pnas}.  

In the case of ellipsoids, the ECH capacities have been computed in \cite{qech}.  The result is that $c_k(E(a,b))$ is the $(k+1)^{st}$ smallest element in the matrix
\[(ma+nb)_{ (m, n) \in \mathbb{Z}_{\ge 0} \times \mathbb{Z}_{\ge 0}}.\]

Using ECH capacities, we can now prove the following lower bound, which we will then use to prove Lemma~\ref{lem:punchy}.

\begin{lemma}
\label{lem:intlemma}
Fix any real number $b \ge 1$.  Then:
\begin{itemize}
\item $c_b(a) = 1$ for $1 \le a \le b$.
\item $c_b(a) \ge a/b$ for $b \le a \le \lfloor b \rfloor +1$.
\item $c_b(a) \ge (\lfloor b \rfloor+1)/b$ for $\lfloor b \rfloor+1 \le a \le (\lfloor b \rfloor +1)^2/b.$
\item $c_b(a) \ge a/(\lfloor b \rfloor+1)$ for $(\lfloor b \rfloor +1)^2/b \le a \le \lfloor b \rfloor+2.$
\item $c_b(a) \ge (\lfloor b \rfloor + 2)/(\lfloor b \rfloor + 1)$ for $\lfloor b \rfloor + 2 \le a \le b(\lfloor b \rfloor + 2)^2/(\lfloor b \rfloor + 1)^2.$ 
\item If $b$ is an integer, then $c_b(a) \ge a/(b+1)$ for $(b+1)^2/b \le a \le b+3.$
\item If $b$ is an integer, then $c_b(a) \ge (b+3)/(b+1)$ for $b+3 \le a \le b(b+3)^2/(b+1)^2.$  
\end{itemize}
\end{lemma}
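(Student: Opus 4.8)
The plan is to deduce every item directly from the monotonicity of ECH capacities together with the explicit formula for $c_k(E(a,b))$, using only the first three capacities $c_0, c_1, c_2$. Recall that for the ellipsoid $E(1,a)$ with $a \geq 1$, the sequence of capacities begins $c_0 = 0$ (or, in the normalization that drops the zeroth term, $c_0 = 1$), $c_1 = 1$, and then the next capacities are the elements of the multiset $\{m + na : m,n \geq 0\}$ listed in increasing order; concretely the small ones are $1, \min(2,a), \ldots$. For the target $E(\lambda, \lambda b) = \lambda E(1,b)$ the capacities scale linearly, so $c_k(E(\lambda,\lambda b)) = \lambda\, c_k(E(1,b))$. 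The embedding $E(1,a) \to E(\lambda,\lambda b)$ therefore forces $c_k(E(1,a)) \leq \lambda\, c_k(E(1,b))$ for every $k$, i.e. $\lambda \geq c_k(E(1,a))/c_k(E(1,b))$, and hence $c_b(a) \geq \sup_k c_k(E(1,a))/c_k(E(1,b))$.

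The first item is standard: for $1 \leq a \leq b$ one has the trivial inclusion $E(1,a) \subseteq E(1,b) = E(\lambda,\lambda b)$ with $\lambda = 1$, giving $c_b(a) \leq 1$; and the volume bound (or $c_1$) gives $c_b(a) \geq 1$. For the remaining items I would pick, for each stated range of $a$, a single well-chosen capacity index $k$ and track which lattice point $(m,n)$ realizes $c_k(E(1,a))$ and which realizes $c_k(E(1,b))$. The key observation is that as long as $a$ (resp. $b$) lies in the interval determined by two consecutive thresholds among $\{1, \lfloor b\rfloor + 1, \lfloor b\rfloor + 2, \ldots\}$, the $k$-th smallest element of the grid $\{m + na\}$ is given by a fixed linear function of $a$. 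For instance, for $b \leq a \leq \lfloor b \rfloor + 1$ the relevant comparison is between $c_?(E(1,a)) = a$ (the entry from $(0,1)$, which is the second-smallest positive grid element once $a \leq 2$, or more generally sits at a predictable index) and the corresponding entry $b$ for the target, yielding $c_b(a) \geq a/b$. Similarly, the bound $c_b(a) \geq (\lfloor b\rfloor + 1)/b$ on $[\lfloor b\rfloor+1, (\lfloor b\rfloor+1)^2/b]$ comes from comparing the grid entry equal to $\lfloor b\rfloor + 1$ (realized by $(\lfloor b\rfloor + 1, 0)$) in the source with the same entry in the target; the bound $c_b(a) \geq a/(\lfloor b\rfloor + 1)$ on $[(\lfloor b\rfloor + 1)^2/b, \lfloor b\rfloor + 2]$ comes from the entry equal to $a$ (from $(0,1)$) compared with $\lfloor b\rfloor + 1$ (from $(\lfloor b\rfloor+1, 0)$) in the target — note $a/(\lfloor b\rfloor+1) \geq (\lfloor b\rfloor+1)/b$ precisely when $a \geq (\lfloor b\rfloor+1)^2/b$, so the two bounds match at the endpoint. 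The entry $c_b(a) \geq (\lfloor b\rfloor+2)/(\lfloor b\rfloor+1)$ on $[\lfloor b\rfloor + 2, b(\lfloor b\rfloor+2)^2/(\lfloor b\rfloor+1)^2]$ uses the grid point $(\lfloor b\rfloor+2, 0)$ in the source against $(\lfloor b\rfloor+1,0)$ in the target. The last two items, valid when $b$ is an integer, are the analogous statements shifted one step further: compare $(0,1)$ against $(b+1,0)$, and then $(b+3,0)$ against $(b+1,0)$; integrality of $b$ is used to guarantee that $b+1, b+2, b+3$ are honest grid thresholds (and that $(b,0)$ and $(0,1)$ both contribute the value $b$, so no intermediate threshold is skipped).

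The main obstacle is purely bookkeeping: one must verify that for each stated $a$-interval the chosen lattice point really does realize the claimed capacity index simultaneously for source and target, i.e. that no other grid point $(m',n')$ sneaks in below it and shifts the index. This is where the hypothesis $b \geq 1$ and the precise form of the interval endpoints matter, and where the integral-$b$ items genuinely need $b \in \Z$. I would handle this by, for each item, writing out the first few grid elements of $\{m+na\}$ in increasing order as piecewise-linear functions of $a$ on the relevant range, doing the same for $b$, and matching indices; the inequalities between consecutive endpoints (e.g. $(\lfloor b\rfloor+1)^2/b \geq \lfloor b\rfloor+1$ and $\leq \lfloor b\rfloor+2$, which follow from $\lfloor b\rfloor \leq b < \lfloor b\rfloor+1$) make the intervals nest correctly. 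Since only $c_0, c_1, c_2$ and a bounded number of subsequent capacities enter, this is a finite, elementary check, and I would present it compactly rather than case-by-case in full.
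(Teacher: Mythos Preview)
Your approach is exactly the paper's: for each item, pick one ECH capacity index $k$, compute $c_k(E(1,a))$ and $c_k(E(1,b))$, and apply monotonicity. Your lattice-point identifications are essentially correct. However, your framing contains a real error: the assertion that ``only the first three capacities $c_0, c_1, c_2$'' (or later, ``a bounded number'') suffice is false. The indices that work depend on $b$ and grow linearly with it. Concretely, the paper uses
\[
k = \lfloor b \rfloor + 1 \quad (\text{items 2--3}), \qquad
k = \lfloor b \rfloor + 2 \quad (\text{items 4--5}), \qquad
k = b+3 \quad (\text{items 6--7, $b$ integral}),
\]
and computes directly that $c_{\lfloor b\rfloor+1}(E(1,b)) = b$, $c_{\lfloor b\rfloor+2}(E(1,b)) = \lfloor b\rfloor+1$, and, when $b\in\Z$, $c_{b+3}(E(1,b)) = b+1$ (the last uses that $b$ and $b+1$ each occur with multiplicity two in the sequence for integral $b$). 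Then one checks, for $a$ in the stated ranges, that $c_k(E(1,a))$ equals $a$, $\lfloor b\rfloor+1$, $a$, $\lfloor b\rfloor+2$, $a$, $b+3$ respectively.

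The ``bookkeeping'' you defer is thus the whole proof, and it cannot be done as a single finite check independent of $b$; rather, one must argue uniformly in $b$ by listing the first $\lfloor b\rfloor + 3$ (or $b+4$) grid values symbolically. Your sketch gets the source and target values right in each case, but you never name the common index $k$, and phrases like ``the same entry in the target'' for item~3 are ambiguous (the target value there is $b$, not $\lfloor b\rfloor+1$). Write out the index and the two capacity values explicitly for each item and the proof is complete.
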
  

Concerning the statement of the lemma, we remark, for example, that it might be the case that various bullets points are vacuously true --- for example, for $b = 1.2$, $(\lfloor b \rfloor +1)^2/b > \lfloor b \rfloor+2$.

\begin{proof}

To prove the first bullet point, we note that $E(1,a)$ includes into $E(1,b)$ for $a \le b$; since $c_1(E(1,a)) = c_1(E(1,b)) = 1$, this inclusion is optimal by \eqref{eqn:obstruct}, in the sense that no larger scaling of $E(1,a)$ also embeds, so the bullet point holds.

To prove the second and third bullet points, 
we note first that $c_{\lfloor b \rfloor + 1}(E(1,b)) = b$.  Then, with $b \le a \le \lfloor b \rfloor + 1$, we have $c_{\lfloor b \rfloor +1}(E(1,a)) = a$, so that the second bullet point follows by \eqref{eqn:obstruct}; and with $a \ge \lfloor b \rfloor + 1$, we have  $c_{\lfloor b \rfloor + 1}(E(1,a)) = \lfloor b \rfloor + 1$, hence the third bullet point follows by \eqref{eqn:obstruct}.

The prove the fourth and fifth bullet points, we note first that $c_{\lfloor b \rfloor+2}(E(1,b)) = \lfloor b \rfloor + 1.$ Then, if $a \ge \lfloor b \rfloor + 2$, we have $c_{\lfloor b \rfloor+2}(E(1,a)) = \lfloor b \rfloor + 2,$ hence the fifth bullet point follows by \eqref{eqn:obstruct}.  If $( \lfloor b \rfloor + 1 )^2/b \le a \le \lfloor b \rfloor + 2$, then, as $( \lfloor b \rfloor + 1 )^2/b \ge \lfloor b \rfloor + 1$, we must have  $c_{\lfloor b \rfloor+2}(E(1,a)) = a,$ hence the fourth bullet point follows by \eqref{eqn:obstruct}.

To prove the sixth and seventh bullet points, we note that if $b$ is an integer, then $c_{b+3}(E(1,b)) = b+1$.  Then, if $b+2 \le a \le b+3$, we have $c_{b+3}(E(1,a)) = a,$ hence the sixth bullet point follows by \eqref{eqn:obstruct}, since for $a$ in the domain of the sixth bullet point, $b+2 \le a \le b+3$.  If $a \ge b+3$, we have $c_{b+3}(E(1,a)) = b+3$, hence the seventh bullet point follows by \eqref{eqn:obstruct}.
\end{proof}

We can now prove the main result of this section.  

\begin{proof}[Proof of Lemma~\ref{lem:punchy}]

Recall that the volume obstruction is given by $\sqrt{a/b}$.  We can find the point $a_i$ on the domain of the $i^{th}$ bullet point of Lemma~\ref{lem:intlemma} where the volume obstruction agrees with the lower bound given by each bullet point by setting this lower bound equal to the volume obstruction, and solving for the point $a_i$.  Doing this gives:
\[ a_1 = b, a_2 = b, a_3 = \frac{ (\lfloor b \rfloor + 1)^2 }{b}, a_4 = \frac{ (\lfloor b \rfloor + 1)^2 }{b}, a_5 = b \left(\frac{  \lfloor b \rfloor + 2 }{ \lfloor b \rfloor + 1} \right)^2, \]
\[ a_6 = \frac{ (b+1)^2 }{b}, a_7 = b \left(\frac{b+3}{b+1}\right)^2. \]
We also compute that on each of these intervals, away from $a_i$ the lower bound given by Lemma~\ref{lem:intlemma} is strictly larger than the volume bound.

We next observe that for $a_1, \ldots, a_4, a_6$, it follows from Lemma~\ref{lem:intlemma} that the bounds \eqref{eqn:ineq1} and \eqref{eqn:ineq2} required by Lemma~\ref{lem:punchy} hold.   More precisely, \eqref{eqn:ineq1} and \eqref{eqn:ineq2} for $a_1$ and $a_2$ follow from the first two bullet points; \eqref{eqn:ineq1} and \eqref{eqn:ineq2} for $a_3$ and $a_4$ follow from the third and fourth; for $a_6$, this follows from the third and sixth.

Now assume first that $(k,l)$ is such that the assumptions of Lemma~\ref{lem:nicebound} hold.  Then the accumulation point $a_0$ is bounded by $k/l + 1/l + 1$, which in turn is less than or equal to $\lfloor k/l \rfloor + 2.$   Thus, if $a \le a_0$, then $a$ is in the domain of one of the first four bullet points of Lemma~\ref{lem:intlemma}.  Thus, if $c_b(a) = \sqrt{a/b}$, then $a \in \lbrace a_1, \ldots, a_4 \rbrace$, since at any other point in the $i^{th}$ interval, $c_b$ is bounded from below by a function that is strictly larger than the volume bound.  So, the conclusions of  Lemma~\ref{lem:nicebound} hold in this case by the analysis in the previous paragraph.

Next, assume that $(k,l) \in \lbrace (5,2), (5,3), (5,4) \rbrace$.  Then, by Claim~\ref{clm:exceptionalclaim}, $a_0$ is  strictly bounded from above by $b( \lfloor b \rfloor + 2)^2/( \lfloor b \rfloor + 1)^2,$ for $b = k/l.$   Thus, if $a \le a_0$, then $a$ is in the domain of one of the first five bullet points, but is not the right end point of the fifth and in particular must be strictly smaller than $a_5$; hence if $c_b(a) = \sqrt{a/b}$, then as  in the previous paragraph $a \in \lbrace a_1, \ldots a_4 \rbrace$, so that the conclusions of Lemma~\ref{lem:nicebound} hold in this case as well.

Finally, assume that $l = 1$, and $k \ge 3$.  Then $b = k/l$ is an integer.  By Lemma~\ref{lem:niceintegralbound} the accumulation point $a_0$ is strictly bounded from above by $b(b+3)^2/(b+1)^2.$  Hence, if $a \le a_0$, then $a$ is in the domain of either the first three bullet points, or the sixth or seventh; moreover, it is not the right end point of the seventh and in particular must be strictly smaller than $a_7$.  It follows that if $c_b(a) = \sqrt{a/b}$, then $a \in \lbrace a_1, \ldots, a_3, a_6 \rbrace$, and so just as in the previous paragraphs the conclusions of Lemma~\ref{lem:nicebound} hold as well.

\end{proof}

\subsection{The $E(1,4/3)$ case}
\label{sec:43}

To deal with the case where $b = 4/3$, we need to prove the following.

\begin{proposition}
\label{lem:randomlemma}
For $\epsilon$ sufficiently small,
\[ c(a,4/3) = \frac{a+3}{4}\]
if $3 \le a \le  3 +\epsilon$, and
\[ c(a,4/3) = 3/2,\]
if $3 - \epsilon \le a \le 3.$
\end{proposition}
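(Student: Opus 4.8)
The plan is to pin down $c(a,4/3)$ near $a=3$ from both sides using ECH capacities for the obstruction and McDuff's theorem \cite{m} (that ECH capacities are a \emph{sharp} obstruction for ellipsoid-into-ellipsoid embeddings) to promote the capacity bound to an exact computation. Note $b=4/3$, so the target is $E(\lambda,\tfrac43\lambda)$, and $\lfloor b\rfloor+1=2$, $\lfloor b\rfloor+2=3$. At $a=3$ the volume obstruction is $\sqrt{3/(4/3)}=\sqrt{9/4}=3/2$, and one checks that $b(\lfloor b\rfloor+2)^2/(\lfloor b\rfloor+1)^2 = (4/3)(9/4)=3$, so $a=3$ is exactly the right endpoint $a_5$ of the fifth bullet of Lemma~\ref{lem:intlemma}. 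That is the structural reason the two one-sided formulas meet continuously at $3/2$.

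\emph{The left side ($3-\epsilon\le a\le 3$).} First I would invoke Lemma~\ref{lem:intlemma}: the fourth bullet gives $c_b(a)\ge a/(\lfloor b\rfloor+1)=a/2$ on $[(\lfloor b\rfloor+1)^2/b,\lfloor b\rfloor+2]=[3,3]$ — degenerate here — and the fifth bullet gives $c_b(a)\ge 3/2$ on $[3,3]$, also degenerate, so I instead want the bound $c_b(a)\ge 3/2$ for $a$ slightly \emph{less} than $3$. For that I would use the ECH capacity that certified the fifth bullet: one has $c_N(E(1,4/3))=2$ for the appropriate index $N$ (this is exactly the computation behind $c_{\lfloor b\rfloor+2}(E(1,b))=\lfloor b\rfloor+1$ in the proof of Lemma~\ref{lem:intlemma}, with $\lfloor b\rfloor+2 = 3$). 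For $a$ near $3$ but $a\le 3$ one still has $c_N(E(1,a))=3$ by the matrix description of ECH capacities of ellipsoids (the relevant entry stabilizes on a neighborhood of $a=3$), whence $3=c_N(E(1,a))\le c_N(E(\lambda,\tfrac43\lambda))=\lambda\, c_N(E(1,4/3))=2\lambda$, i.e.\ $\lambda\ge 3/2$. Conversely, $c(a,4/3)\le c(3,4/3)$ by monotonicity of $c_b$ in $a$, and $c(3,4/3)\le 3/2$ because $c(3,4/3)$ is at most the volume bound value $3/2$ \emph{provided} the volume bound is attained there; I would get $c(3,4/3)=3/2$ directly from McDuff's sharpness theorem by exhibiting that no ECH capacity obstructs $E(1,3)\hookrightarrow E(3/2,2)$ (all capacities of $E(1,3)$ are $\le$ those of $E(3/2,2)$ — a finite check combined with the eventual-linearity of ECH capacities, or simply the volume computation since $E(1,3)$ and $E(3/2,2)$ have equal volume and $E(1,3)$ scales to $E(3/2,2)$ only up to volume, so really one needs the capacity comparison). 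Combining, $c(a,4/3)=3/2$ on $[3-\epsilon,3]$.

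\emph{The right side ($3\le a\le 3+\epsilon$).} Here the claimed value $\tfrac{a+3}{4}$ is linear with slope $1/4$ and equals $3/2$ at $a=3$, so it is \emph{not} the volume bound $\sqrt{3a/4}$ (whose slope at $a=3$ is larger). For the lower bound I would again use an ECH capacity: find the index $N'$ with $c_{N'}(E(1,a))=a$ for $a\in[3,3+\epsilon]$ and $c_{N'}(E(1,4/3))=$ the value making $\lambda\cdot c_{N'}(E(1,4/3))=a$ force $\lambda\ge (a+3)/4$, i.e.\ $c_{N'}(E(1,4/3))=4/3$ won't do; rather I expect the relevant capacity to be one where $c_{N'}(E(1,4/3)) = (4+3\cdot(4/3))/\text{something}$ — concretely the matrix entry $m\cdot 1 + n\cdot(4/3)$ realizing the correct linear segment — and I would read off the exact $(m,n)$ from the requirement that the resulting line through the ECH staircase of $E(1,4/3)$ has the form $\lambda = (a+3)/4$. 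For the matching upper bound I would appeal to McDuff's theorem: since ECH capacities are sharp for ellipsoids, it suffices to check that \emph{all} ECH capacities of $E(1,a)$ are dominated by those of $E((a+3)/4,\tfrac43\cdot(a+3)/4)$ for $a\in[3,3+\epsilon]$; the eventual linearity/periodicity of ECH capacities of ellipsoids reduces this to finitely many inequalities, which I would verify, with the binding one being exactly the capacity producing the lower bound.

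\emph{Main obstacle.} The delicate part is identifying the precise ECH capacity indices $N, N'$ and the matrix entries $(m,n)$ that govern the behavior right at and right after $a=3$, and then checking that \emph{no other} capacity gives a stronger obstruction on the right — i.e.\ that the single linear piece $\tfrac{a+3}{4}$ really is the ECH-capacity-sharp answer on a full one-sided neighborhood. This is where the combinatorics of the weight expansion of $E(1,4/3)$ (its negative weight sequence, and hence its ECH capacity sequence) must be used carefully; I expect the clean bookkeeping to come from the fact, noted after Theorem~\ref{thm:key}, that $a=3$ is itself the accumulation point $a_0$ for $b=4/3$, so the line $\lambda=(a+3)/4$ is precisely the ``outer corner'' line $\sqrt{a\cdot per^2/vol}$-type object degenerating at $a_0$; making that identification rigorous, and ruling out a genuine staircase by showing this line is the exact value on $[3,3+\epsilon]$ rather than merely an upper or lower bound, is the crux.
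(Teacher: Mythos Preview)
Your overall architecture is right: lower bounds from individual ECH capacities, upper bounds from McDuff's sharpness theorem. But the proposal has one minor slip and one genuine gap.

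\textbf{Minor slip on the left side.} For $a$ slightly less than $3$ you assert $c_N(E(1,a))=3$ with $N=\lfloor b\rfloor+2=3$. In fact $c_3(E(1,a))=a$ for $2<a<3$, so that index gives only $\lambda\ge a/2$. The index that works is $N=2$: one has $c_2(E(1,4/3))=4/3$ and $c_2(E(1,a))=2$ for all $a\ge2$, yielding $\lambda\ge 3/2$ on the whole interval $[2,3]$. This is exactly what the paper does, so your approach is salvageable here; only the bookkeeping is off. (Similarly, on the right the correct index is $N'=10$, with $c_{10}(E(1,4/3))=4$ and $c_{10}(E(1,a))=a+3$ for $3\le a\le 4$, giving $\lambda\ge(a+3)/4$.)

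\textbf{Genuine gap on the right side.} The crux is the upper bound $c_{4/3}(a)\le(a+3)/4$ for $a>3$, and here your proposal does not supply a method. You write that McDuff's theorem reduces this to checking that all ECH capacities of $E(1,a)$ are dominated by those of $\tfrac{a+3}{4}E(1,4/3)$, and that ``eventual linearity/periodicity \ldots\ reduces this to finitely many inequalities.'' That reduction is not available: while the target $E(3,4)$ has a rational Ehrhart quasi-polynomial, the source triangle has irrational slope (one takes $a$ irrational), so its lattice count is \emph{not} periodic and the comparison must be made for all $t$ simultaneously. The paper handles this by comparing not to the target directly but to the $a=3$ source: it decomposes the region between the $a$-triangle and the $a=3$ triangle into two pieces $R_U,R_D$, proves via a horizontal-slice argument (their Lemma~\ref{lem:keylem}) that $D\ge U$ (with a sharpening when $t\equiv4\pmod{12}$), and then computes the Ehrhart quasi-polynomials of the two rational triangles $\mathcal{T}_{1/2,1/6}$ and $\mathcal{T}_{1/3,1/4}$ explicitly modulo $12$ to close the estimate. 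This slice comparison is the new idea, and nothing in your proposal substitutes for it. Your ``Main obstacle'' paragraph correctly flags this as the delicate point, but the speculation about the accumulation-point formula does not yield a proof; note in particular that $a_0=3$ is the point where one must rule out a staircase, so knowing it is $a_0$ gives no a priori control on $c_{4/3}$ there.

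Note also that your proposed upper bound on the left, via $c_{4/3}(3)=3/2$, is not independent of the right-side difficulty: in the paper, $c_{4/3}(3)\le3/2$ is obtained by evaluating the hard upper bound at $a=3$. One could alternatively verify the single rational-to-rational inequality $L_{\mathcal{T}_{1,1/3}}(t)\ge L_{\mathcal{T}_{2/3,1/2}}(t)$ for all $t$ via Ehrhart quasi-polynomials (both sides are genuinely periodic here), but you would still need to carry that out.
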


The proof of Proposition~\ref{lem:randomlemma} is rather delicate, and will be the topic of this section.  The result itself is loosely analogous to the difficult \cite[Thm. 1.1.2.ii]{ms}, although we use a different method in our proof.  The main challenging fact that we need to prove is the following:

\begin{proposition}
We have
\begin{equation}
\label{eqn:upperbound}
c_{4/3}(a) \le \frac{a+3}{4}.
\end{equation}
for $a \ge 3$.
\end{proposition}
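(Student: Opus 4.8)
The plan is to produce the required embedding $E(1,a)\hookrightarrow E(\tfrac{a+3}{4},\tfrac{a+3}{3})$ by checking ECH capacities. By McDuff's theorem \cite{m} that ECH capacities are a complete set of invariants for the problem of embedding one four-dimensional ellipsoid into another, it is enough to prove
\[ c_k\big(E(1,a)\big)\ \le\ c_k\big(E(\tfrac{a+3}{4},\tfrac{a+3}{3})\big)\qquad\text{for all }k\ge 0. \]
Since $E(\tfrac{a+3}{4},\tfrac{a+3}{3})=\tfrac{a+3}{12}\,E(3,4)$, the scaling property of ECH capacities identifies the right-hand side with $\tfrac{a+3}{12}\,c_k(E(3,4))$, and by the ellipsoid formula recalled above (from \cite{qech}) the number $c_k(E(3,4))$ is the $(k+1)$-st smallest entry of the array $(3m+4n)_{m,n\ge 0}$. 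Rewriting a bound $c_k(E(1,a))\le\lambda$ as ``there are at least $k+1$ lattice points $(m,n)\in\mathbb{Z}_{\ge 0}^2$ with $m+na\le\lambda$'', the whole statement reduces to the combinatorial inequality
\begin{equation}\label{eqn:latticegoal}
\#\{(m,n)\in\mathbb{Z}_{\ge 0}^2 : m+na\le\tfrac{a+3}{12}\,\nu\}\ \ge\ \#\{(p,q)\in\mathbb{Z}_{\ge 0}^2 : 3p+4q\le\nu\}
\end{equation}
for every real $a\ge 3$ and every $\nu\ge 0$; and it suffices to verify \eqref{eqn:latticegoal} when $\nu$ lies in the image of $(m,n)\mapsto 3m+4n$, because the right-hand side is constant and the left-hand side nondecreasing between consecutive such values.

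The first thing I would record is the ``continuous'' shadow of \eqref{eqn:latticegoal}: comparing the areas of the two triangles amounts to $\tfrac{(a+3)^2}{12a}\ge 1$, which holds since $\tfrac{(a+3)^2}{12a}-1=\tfrac{(a-3)^2}{12a}$, with equality exactly at $a=3$. Hence for $a$ bounded away from $3$ the inequality \eqref{eqn:latticegoal} carries a definite amount of slack, and it follows for all $\nu$ beyond an explicit threshold (depending only on the distance from $a$ to $3$) from crude Ehrhart estimates — bounding the left side below by discarding fractional parts, and the right side above by enlarging its triangle by a bounded amount — while the finitely many remaining small $\nu$ for such $a$ are dispatched by an elementary case analysis organized by which axial points $(j,0)$, and the point $(0,1)$, are available at the scale $\tfrac{a+3}{12}\nu$.

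The genuinely delicate regime, and the one I expect to be the main obstacle, is $a$ near $3$: there \eqref{eqn:latticegoal} is tight to order $O(1)$ — the two counting functions agree through their quadratic and linear terms — so area heuristics are useless and the constant-order terms must be controlled exactly, and, crucially, this tightness persists uniformly as $a\to 3$ (for $a=(3q+1)/q$ the first threshold above grows with $q$), so the analysis near $a=3$ cannot be reduced to finitely many checks. I would treat this by first settling the endpoint $a=3$, where \eqref{eqn:latticegoal} becomes $\#\{m+3n\le\nu/2\}\ge\#\{3p+4q\le\nu\}$, equivalently $2\,c_k(E(1,3))\le c_k(E(3,4))$ for all $k$; this is a concrete inequality between two explicitly described counting functions $\nu\mapsto\#\{m+3n\le\nu\}$ and $\nu\mapsto\#\{3p+4q\le\nu\}$ — each quadratic with a bounded periodic correction — and can be proved by recording the multiplicities of the relevant values and comparing term by term (equivalently, via the generating functions $\tfrac1{(1-x)^2(1-x^3)}$ and $\tfrac1{(1-x)(1-x^3)(1-x^4)}$). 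With the sharp endpoint bound in hand, I would then extend \eqref{eqn:latticegoal} to a right-neighborhood of $3$ by tracking, for fixed $k$ and $a=3+t$, the exact dependence of $c_k(E(1,a))$ on $t$ against $\tfrac{a+3}{12}c_k(E(3,4))$: the difference vanishes to first order at $t=0$, with second-order behavior governed by the $(a-3)^2$ area slack, and one uses the continuity and monotonicity of $c_k(E(1,\cdot))$ together with control of the second coordinate of the lattice point realizing $c_k(E(1,a))$ near $a=3$.

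\textbf{Main obstacle.} The crux is the sharpness of \eqref{eqn:latticegoal} at $a=3$ and, uniformly, as $a\to 3$: the inequality has no room to spare there, so the argument cannot rest on volume/area estimates and instead demands a precise, term-by-term comparison of the lattice-point-counting functions (equivalently of the ECH-capacity sequences of $E(1,3)$ and $E(3,4)$), followed by a careful propagation of this sharp estimate into a neighborhood of $a=3$, where the first-order comparison is exactly balanced.
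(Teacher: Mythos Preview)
Your reduction to the lattice-point inequality via McDuff's sharpness theorem is exactly how the paper begins, and you have correctly located the crux: the regime $a\to 3$, where the area slack is $O((a-3)^2)$ and the counting functions match through linear order. But your proposed resolution of that regime has a genuine gap. You plan to prove the endpoint $a=3$ and then ``propagate'' to a right-neighbourhood. For fixed $k$ the function $a\mapsto c_k(E(1,a))$ is piecewise linear with slope equal to the second coordinate $n_k$ of the realizing lattice point, while the target $\tfrac{a+3}{12}c_k(E(3,4))$ has slope $\tfrac{1}{12}c_k(E(3,4))$. At those $k$ for which the endpoint inequality is an equality---and since $E(1,3)\hookrightarrow \tfrac12 E(3,4)$ is volume-filling there are infinitely many such $k$---your perturbation requires $n_k\le\tfrac{1}{12}c_k(E(3,4))$. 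You assert ``control of the second coordinate'' but do not prove it; the trivial bound $n_k\le\tfrac13 c_k(E(1,3))=\tfrac16 c_k(E(3,4))$ misses by a factor of two. Without this, the neighbourhood of $3$ on which the perturbation succeeds is $k$-dependent and may shrink to zero, while the threshold in $\nu$ below which your ``away from $3$'' area estimate fails blows up as $a\to 3$; the two halves of your argument need not overlap. Your remark that ``second-order behaviour is governed by the $(a-3)^2$ area slack'' conflates the asymptotic count (as $k\to\infty$) with the behaviour at fixed $k$, where everything is piecewise linear and there is no second-order term to exploit.

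The paper avoids the near/far dichotomy entirely by arguing uniformly in $a$. Rather than perturbing from $a=3$, it compares the triangle for arbitrary $a>3$ directly to the $a=3$ triangle: the symmetric difference is split into a region $R_U$ lost and a region $R_D$ gained, and the key lemma establishes $\#(R_U\cap\mathbb Z^2)\le\#(R_D\cap\mathbb Z^2)$ (with a one-point strengthening when $t\equiv 4\pmod{12}$) by a horizontal-slice argument---pairing integer height $y_0$ in $R_U$ with $y_1=\lfloor t/6\rfloor-y_0$ in $R_D$ and showing the $R_D$-slice is at least as long. This, combined with an explicit computation of the two rational Ehrhart quasi-polynomials at $a=3$ (period dividing $12$, with the $t\equiv 4$ residue class accounting for the needed extra point), closes the inequality for \emph{all} $a\ge 3$ simultaneously. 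The slice-pairing is the idea your outline is missing.
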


\begin{proof}

To prove \eqref{eqn:upperbound}, we want to show that there exists a symplectic embedding
\[ E(1,a) \to \frac{a+3}{4} E(1,4/3).\]
By rescaling, it is equivalent to find an embedding
\begin{equation}
\label{eqn:desiredembedding}
E\left(\frac{12}{a+3}, \frac{12a}{a+3}\right) \to E(3,4).
\end{equation}
Since $c_{4/3}$ is continuous in $a$, we can in addition assume that $a$ is irrational, which is convenient for some of the arguments below.

To find this embedding, we use in general terms a technique first introduced in \cite{cgk, qech, ms, m}. 

Namely, McDuff showed in \cite{m} that the obstruction coming from ECH capacities is in fact sharp for four-dimensional ellipsoid embeddings.  In other words, the existence of embeddings like \eqref{eqn:desiredembedding} can be approached through purely combinatorial considerations.  In principle, since there are infinitely many ECH capacities $c_k$, this requires checking infinitely many potential obstructions.  However, in \cite{cgk}, this was rephrased in rational cases in terms of ``Ehrhart functions", defined below.  Ehrhart functions are a classical object of study in enumerative combinatorics which are often amendable to computations.    

More precisely, we can apply \cite[Lem. 5.2]{cgk} to conclude that an embedding \eqref{eqn:desiredembedding} exists if and only if
\begin{equation}
\label{eqn:latticeproblem}
L_{\mathcal{T}_{\frac{a+3}{12}, \frac{a+3}{12a}}}(t) \ge L_{\mathcal{T}_{\frac{1}{3}, \frac{1}{4}}}(t),
\end{equation}
for all positive integers $t$.  Here, $\mathcal{T}_{u,v}$ denotes\footnote{The paper \cite{cgk} actually uses the convention that $\mathcal{T}_{u,v}$ denotes the triangle with vertices $(0,u), (v,0)$ and $(0,0)$, but this triangle has the same number of lattice points as the triangle defined using the conventions in this paper.} the triangle with vertices $(u,0)$ and $(0,v)$, and $L$ denotes its {\em Ehrhart function} 
\[ L_{\mathcal{T}_{u,v}}(t) = \# \left \lbrace \mathbb{Z}^2 \cap \mathcal{T}_{tu, tv} \right \rbrace.\]

Our method is now loosely inspired by the proof in \cite[Lem. 3.2.3]{ghost}, see also \cite[Rmk. 3.2.6]{ghost}, although there is a new idea needed here that we will comment on below.  

As in the proof in \cite[Lem. 3.2.3]{ghost}, we will first observe that \eqref{eqn:latticeproblem} holds when $a = 3$.  In fact, strict inequality holds in \eqref{eqn:latticeproblem}, as we will see below.  The idea is now to vary $a$ and see how $L_{\mathcal{T}_{\frac{a+3}{12}, \frac{a+3}{12a}}}(t)$ changes.   As in \cite[Lem. 3.2.3]{ghost}, we do this by decomposing the region between $\mathcal{T}_{\frac{a+3}{12}, \frac{a+3}{12a}}$ for some $a$ and $\mathcal{T}_{\frac{3+3}{12}, \frac{3+3}{12 \cdot 3}}$ into two regions $R_U$ and $R_D$, and comparing the number of lattice points $U$ and $D$.

\begin{figure}[h!]
\centering
\includegraphics[width=1.2\textwidth]{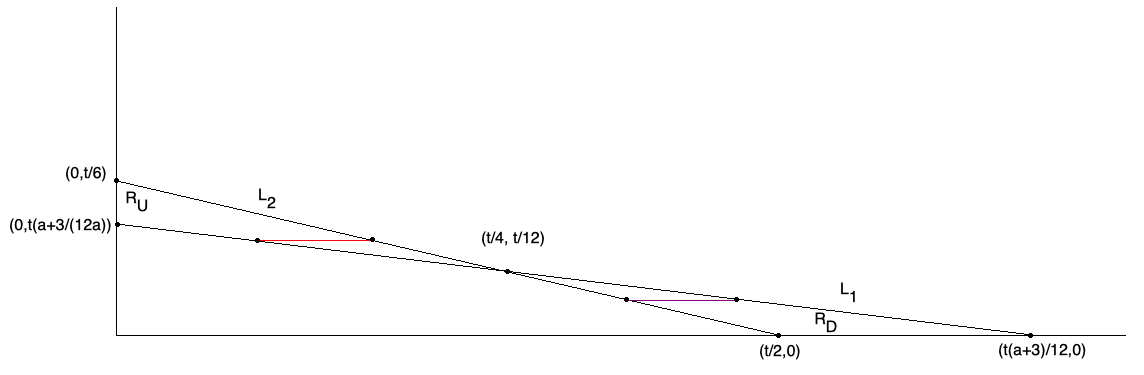}
\caption{The regions $R_U$ and $R_D$ that we want to compare.  The lines $L_1$ and $L_2$ are also labeled for the convenience of the reader.  The red and purple lines correspond to the kind of horizontal slices that we make in order to compare lattice point counts.}
\label{fig:figure}
\end{figure}

More precisely, for positive integer $t$, we let $R_U$ be the region bounded by the $y$-axis, the line $L_1$ given by the equation $\frac{12}{a+3}x + \frac{12a}{a+3}y = t$, and the line $L_2$ given by the equation $2x + 6y = t$.  Let $R_D$ be the region bounded by these two lines, and the $x$-axis.  Let $U$ denote the number of lattice points in $R_U$ and $D$ the number of lattice points in $R_D$.  We note that the lines $L_1$ and $L_2$ intersect at the point $(t/4, t/12).$    We have illustrated the setup in Figure~\ref{fig:figure}.

We now have the following key lemma:

\begin{lemma}
\label{lem:keylem}
\begin{itemize}
\item $U \le D$. 
\item When $t$ is congruent to $4$ modulo $12$, then $U \le D-1$.  
\end{itemize}
\end{lemma}

Up to this point, our method in this section has been mostly parallel to the method in \cite{ghost} described above.  However, at this point, the ideas in \cite{ghost} no longer seem to work, and something new is needed.  The new technique we introduce here is to  compare the lattice points in $R_U$ and $R_D$ by comparing the number of lattice points on horizontal slices at integer height, see Figure~\ref{fig:figure}.  It turns out that we can get the inequality we need by establishing the analogous inequality for each slice individually, which is a priori considerably stronger than what is required.

The details are as follows.

\begin{proof}

We begin with the proof of the first bullet point.

Let $t/6 \ge y_0 \ge t/12$ be an integer.  We define 
\[ y_1 = \lfloor t/6 \rfloor - y_0.\]
Then $0 \le y_1 \le t/12.$  We will show that for each $y_0$, the number of lattice points in $R_U$ with $y$-coordinate $y_0$ is no more than the number of lattice points in $R_D$ with $y$-coordinate $y_1$, which will imply the first bullet point of the lemma.

In other words, 
if we define
\[ x_1 := \frac{t(a+3) - 12ay_0}{12}, \quad x_2 := \frac{t - 6y_0}{2},\]
and
\[ x_3 := \frac{t - 6y_1}{2}, \quad x_4 := \frac{t(a+3) - 12ay_1}{12},\]
then we need to show that 
\begin{equation}
\label{eqn:greatbound}
\lfloor x_2 \rfloor - \lceil max(0,x_1) \rceil + 1 \le \lfloor x_4 \rfloor - \lceil x_3 \rceil + 1.
\end{equation}

We now explain why \eqref{eqn:greatbound} holds.

Our argument will be as follows.  Assume that $x_3$ is not an integer; note that $x_1$ is never an integer, since $a$ is irrational.  
Then, we will show below that
\begin{equation}
\label{eqn:lowerboundversion}
\lfloor x_2 \rfloor - \lfloor x_1 \rfloor \le \lfloor x_4 \rfloor - \lfloor x_3 \rfloor. 
\end{equation}
Next, in the case where $x_3$ is an integer, we will show that
\begin{equation}
\label{eqn:lowerboundversionint}
\lfloor x_2 \rfloor - \lfloor x_1  \rfloor \le \lfloor x_4 \rfloor - \lfloor x_3 \rfloor + 1. 
\end{equation}
The equations \eqref{eqn:lowerboundversion} and \eqref{eqn:lowerboundversionint} will imply \eqref{eqn:greatbound}, since $\lfloor x_2 \rfloor - \lceil max(0,x_1) \rceil + 1 \le \lfloor x_2 \rfloor - \lfloor x_1 \rfloor.$

We now explain why \eqref{eqn:lowerboundversion} and \eqref{eqn:lowerboundversionint} hold.  We know that
\[ \lfloor x_2 \rfloor - \lfloor x_1 \rfloor  = \frac{t}{2} - \left\lbrace \frac{t}{2} \right\rbrace - 3y_0 - \frac{t(a+3)}{12} + ay_0 + \left\lbrace \frac{t(a+3)}{12} - ay_0 \right\rbrace.\]
Here, $\lbrace \cdot \rbrace$ denotes the fractional part function, defined by $\lbrace z \rbrace = z - \lfloor z \rfloor$.
We also know that
\[ \lfloor x_4 \rfloor - \lfloor x_3 \rfloor = \frac{t(a+3)}{12} - a y_1 - \left \lbrace \frac{t(a+3)}{12} - a y_1 \right \rbrace - \frac{t}{2} + \left\lbrace \frac{t}{2} \right\rbrace + 3 y_1.\]

We first prove \eqref{eqn:lowerboundversion} in the case where $x_3$ is not an integer, which is the heart of the argument.

To do this, we want to show, in view of combining the previous two equations, that
\[ t - 2 \lbrace t/2 \rbrace - 3 (y_0+y_1) -  \frac{t(a+3)}{6} + a(y_0+y_1) + \left\lbrace \frac{t (a+3)}{12} - a y_1 \right\rbrace + \left\lbrace  \frac{t(a+3)}{12}  - a y_0 \right\rbrace \le 0.\]
Substituting for $y_1$, we have that the above expression is equal to
\[ t  - 2 \lbrace t/2 \rbrace - 3 \lfloor t/6 \rfloor - \frac{t(a+3)}{6} + a \lfloor t/6 \rfloor + \delta,\]
where 
\[ \delta := \left\lbrace \frac{t(a+3)}{12} - ay_0 \right\rbrace + \left\lbrace \frac{t(a+3)}{12} + ay_0  - a\lfloor t / 6 \rfloor \right\rbrace.\]
So, collecting $\lfloor t/6 \rfloor$ terms, we want to show that
\[ t - 2 \lbrace t/2 \rbrace + (a-3) \lfloor t/6 \rfloor - \frac{t (a+3) }{6} + \delta \le 0. \]
Equivalently, we want to show that
\begin{equation}
\label{eqn:need}
- 2 \lbrace t/2 \rbrace - (a-3) \lbrace t/6 \rbrace + \delta \le 0.
\end{equation}

Since, for any two numbers $m, n$, we have\footnote{Indeed, the equation is invariant under adding integers to $m$ or $n$, so we can assume $0 \le m, n < 1$, in which case it is immediate.} $\lbrace m \rbrace + \lbrace n \rbrace \le \lbrace m + n \rbrace + 1,$ we know that
\begin{equation}
\label{eqn:r1}
\delta \le \left\lbrace \frac{t}{2} + a \lbrace t/6 \rbrace \right\rbrace + 1.
\end{equation}
The terms $\lbrace t/2 \rbrace, \lbrace t/6 \rbrace$ and $\left\lbrace t/2 + a \lbrace t/6 \rbrace \right\rbrace$ only depend on the equivalence class of $t$, modulo $6$.  
So, to bound the left hand side of \eqref{eqn:need} using the above bound for $\delta$, we can assume 
$t \in \lbrace 0, \ldots, 5 \rbrace.$  With this additional assumption, we then have 
\begin{equation}
\label{eqn:r2}
\left\lbrace \frac{t}{2} + a \lbrace t/6 \rbrace \right\rbrace=  \left\lbrace \frac{t}{2} + a \frac{t}{6}  \right\rbrace =   \left\lbrace \frac{(a - 3)t}{6}\right\rbrace \le \frac{(a-3)t}{6} = (a-3) \left\lbrace\frac{t}{6}\right\rbrace.
\end{equation}

Combing \eqref{eqn:r1} and \eqref{eqn:r2}, we thus have that
\begin{equation}
\label{eqn:last}
 - 2 \lbrace t/2 \rbrace - (a-3) \lbrace t/6 \rbrace + \delta \le -2 \lbrace t/2 \rbrace + 1 = 0,
 \end{equation}
where for the very last equality, we have used the fact that $x_3$ is not an integer, so that $t$ is odd.  This proves \eqref{eqn:need}, hence \eqref{eqn:lowerboundversion}.

When $x_3$ is an integer, all of the proof of \eqref{eqn:lowerboundversion} holds, except that in the very last line $\lbrace t/2 \rbrace = 0$, so that the very last equation \eqref{eqn:last} must be replaced by the bound 
\[ - 2 \lbrace t/2 \rbrace - (a-3) \lbrace t/6 \rbrace + \delta \le 1,\]
hence the weaker bound \eqref{eqn:lowerboundversionint}.

We now explain the proof of the second bullet point.

The argument for the first bullet point still holds to imply that $U \le D$.  To get the sharper bound, we show that, under the assumption that $t$ is congruent to $4$ modulo $12$, 
as $y_0$ above ranges over all integers between $t/6$ and $t/12$ the corresponding $y_1$ is never $y' = \lfloor t/12 \rfloor$.  

Indeed, the $y_1$ corresponding to $y_0$ is maximized for $y_0 = \lceil t/12 \rceil$, so in this case $y_1 = \lfloor t/6 \rfloor - \lceil t/12 \rceil.$  Now, 
\[   \lfloor t/6 \rfloor - \lceil t/12 \rceil = \lfloor t/12 \rfloor - 1,\]
since $t$ is congruent to $4$ modulo 12, which is strictly less than $y'$.

Thus, since  $\left( \frac{t - 6y'}{2}, y' \right)$ is a lattice point in $R_D$, not accounted for by the counts in the proof of the first bullet point, the sharper estimate asserted by the second bullet points holds.
\end{proof}

We now explain how to use the lemma to prove the proposition.  Continue to assume as above that $a$ is irrational.  

We first observe that 
\begin{equation}
\label{eqn:diff}
 L_{\mathcal{T}_{\frac{a+3}{12}, \frac{a+3}{12a}}}(t) = L_{\mathcal{T}_{\frac{1}{2}, \frac{1}{6}}}(t) + D - U - d,
 \end{equation}
 where $d$ is the number of lattice points on the left boundary of $D$, not including the possible lattice point $(t/4, t/12)$ defined above.
 
 We can solve for $d$ explicitly.  Namely, assume that there is a lattice point $(m,n)$ satisfying
 \[ 2m + 6n = t.\]
Then, it follows that $t$ must be an even integer.  Conversely, assume that $t$ is an even integer, and $(x,y)$ is on the line $L_2$.  Then we have
\[ x = \frac{t - 6y}{2}.\]
In particular, for any integer $y < t/12$ such that $(x,y)$ is on the line $L_2$, $x$ must be an integer as well.  It follows that
\begin{equation}
\label{eqn:d}
d = \lceil t/12 \rceil,
\end{equation}
when $t$ is even;
if $t$ is odd then we have $d = 0$.

We know from Lemma~\ref{lem:keylem} that $D \ge U$; however, the $-d$ term is not in general non-negative, so to prove \eqref{eqn:latticeproblem}, we need to compute the difference
\[  L_{\mathcal{T}_{\frac{1}{2}, \frac{1}{6}}}(t) -  L_{\mathcal{T}_{\frac{1}{3}, \frac{1}{4}}}(t).\]
Each of the two-terms in the above expression are Ehrhart functions of rational triangles, so they are readily computed.  In particular, using the formulas in \cite[Thm. 2.10, Exer. 2.34]{br} 
each is a periodic polynomial of degree $2$, with leading order term $t^2/24$.  The linear term for $L_{\mathcal{T}_{\frac{1}{3}, \frac{1}{4}}}(t)$, by \cite[Thm. 2.10]{br} is $t/3$.   The linear term for $L_{\mathcal{T}_{\frac{1}{2}, \frac{1}{6}}}(t)$ is $\frac{5}{12} t$, when $t$ is even, and $t/3$, when $t$ is odd, by \cite[Exer. 2.34]{br}.  

To compute the constant terms, we use the fact that the period of $L_{\mathcal{T}_{\frac{1}{2}, \frac{1}{6}}}(t)$ divides $6$, and the period of $L_{\mathcal{T}_{\frac{1}{3}, \frac{1}{4}}}(t)$ divides $12$; indeed, the basic structure theorem for Ehrhart functions (see for example \cite[Thm. 3.23]{br}) states that the period for a rational convex polytope divides the least common multiple of the denominators of the vertices.  

More precisely, we first compute the constant terms for $L_{\mathcal{T}_{\frac{1}{2}, \frac{1}{6}}}(t)$.  We begin by computing,  
\[ L_{\mathcal{T}_{\frac{1}{2}, \frac{1}{6}}}(0) =  L_{\mathcal{T}_{\frac{1}{2}, \frac{1}{6}}}(1)= 1, \]
\[ L_{\mathcal{T}_{\frac{1}{2}, \frac{1}{6}}}(2) = L_{\mathcal{T}_{\frac{1}{2}, \frac{1}{6}}}(3) = 2,\]
\[ L_{\mathcal{T}_{\frac{1}{2}, \frac{1}{6}}}(4)  = L_{\mathcal{T}_{\frac{1}{2}, \frac{1}{6}}}(5) = 3.\]
Now, since $L_{\mathcal{T}_{\frac{1}{2}, \frac{1}{6}}}(t)$ is a periodic polynomial, with period dividing $6$, we can define $C_0, \ldots, C_5$ to be the constant terms for this periodic polynomial, i.e. $C_i$ is the constant term when $t$ is congruent to $i$, modulo $6$.  We can  then compute the constant terms by using the computations above, namely
\[ C_0 = 1 - \frac{1}{24} (0)^2 - \frac{5}{12} (0) = 1,\]
\[ C_1 = 1 - \frac{1}{24} (1)^2 -  \frac{1}{3} (1) = \frac{5}{8},\]
\[ C_2 = 2 - \frac{1}{24} (2)^2 - \frac{5}{12} (2) =  1,\]
\[ C_3 = 2 - \frac{1}{24} (3)^2 - \frac{1}{3} (3) = 5/8,\]
\[ C_4 = 3 - \frac{1}{24} (4)^2 - \frac{5}{12} (4) = 2/3,\]
\[ C_5 = 3 - \frac{1}{24} (5)^2 - \frac{1}{3} (5) = 7/24.\]

We can compute the constant terms $C'_0, \ldots, C'_{11}$  by the same method.  We omit the details, which are analogous to above, for brevity, only giving the result:
\[ C'_0 = 1, C'_1 = 5/8, C'_2 = 1/6, C'_3 = 5/8,  C'_4 = 1, C'_5 = 7/24, C'_6 = 1/2,\]
\[ C'_7 = 5/8, C'_8 = 2/3, C'_9 = 5/8, C'_{10} = 1/2, C'_{11} = 7/24.  \]

Having computed both Ehrhart functions explicitly, and applying the formula \eqref{eqn:d} for $d$, we now see that
\[ L_{\mathcal{T}_{\frac{1}{2}, \frac{1}{6}}}(t) -  L_{\mathcal{T}_{\frac{1}{3}, \frac{1}{4}}}(t) = d ,\]
except when $t$ is congruent to $4$ mod $12$, in which case the difference in the Ehrhart functions is $d - 1$.  The proposition now follows from \eqref{eqn:diff}, in combination with Lemma~\ref{lem:keylem}.  
\end{proof}

We can now prove Proposition~\ref{lem:randomlemma}.

\begin{proof}

We just showed that $c_{4/3}(a) \le \frac{a+3}{4}$ for $a \ge 3$.  In particular, as an immediate consequence, $c_{4/3}(3) = 3/2$, since a symplectic embedding must be volume preserving, and then 
\begin{equation}
\label{eqn:rlowerbound}
c_{4/3}(a) \le 3/2, 
\end{equation}
for $a \le 3$, since $E(1,a) \subset E(1,3)$ for $a$ in this range.

To find the lower bounds needed to prove the proposition, we again use the theory of ECH capacities.  

More precisely, we first compute 
\[ c_{10}(E(1,4/3)) = 4, \quad c_{10}(E(1,a)) = a+3,\]
 for $3 \le a \le 4.$  Hence, $c_{4/3}(a) \ge \frac{a+3}{4}$ for $3 \le a \le 4$, by \eqref{eqn:obstruct}.  Combining this with the matching upper bound \eqref{eqn:upperbound} then implies that $c_{4/3}(a) = \frac{a+3}{4}$ for $a$ in this range.

We next compute
\[ c_{2}(E(1,4/3)) = 4/3, \quad c_{2}(E(1,a)) = 2,\]
for $a \ge 2$.  Hence, $c_{4/3}(a) \ge 3/2$ for $2 \le a \le 3$, by \eqref{eqn:obstruct}.  Combining this with the matching upper bound \eqref{eqn:rlowerbound} then implies that $c_{4/3}(a) = \frac{3}{2}$ for $a$ in this range.
\end{proof}
\subsection{Completing the proof of Theorem~\ref{thm:main}}
\label{sec:proof}

We can now complete the proof of our main theorem.  

\begin{proof}[Proof of Theorem~\ref{thm:main}]

As explained in \S\ref{sec:outline}, it follows from known results that the function $c_b(a) = \sqrt{a/b}$ for $a$ sufficiently large with respect to $b$; it also follows from known results that the function $c_b(a)$ is piecewise linear away from the limit of distinct singular points.  So, we just have to analyze the case of infinitely many distinct singular points.

Let $b=k/l$, where $k$ and $l$ are relatively prime, and recall the number
\[a_0 =  \frac{k}{l}\left(\frac{k+l+1+\sqrt{(k+l+1)^2 - 4kl}}{2k}\right)^2\]
from Lemma~\ref{prop:keyprop}.

Assume that there are infinitely many singular points $s_i$.  Then, by Lemma~\ref{prop:keyprop}, the $s_i$ must accumulate at $a_0$, and $c_b(a_0)$ must equal the volume obstruction.  We now argue that there is a contradiction  if $b \not \in \lbrace 1, 2, 3/2 \rbrace.$

 Namely, if $b \not \in \lbrace 1, 2, 3/2, 4/3 \rbrace$, then we know from Lemma~\ref{lem:punchy} that \eqref{eqn:ineq1} and \eqref{eqn:ineq2} hold at $a_0$.

We now claim that this implies that the graph of $c_b(a_0)$ would locally be given by these lines near $a_{0}$ --- which is an absurdity, since $a_0$ was the limit of distinct singular points. 

To see why this final claim is true, we need the following two properties for the function $c_b(a)$:
\begin{itemize}
\item (Monotonicity) $c_b(x_1) \le c_b(x_2)$ if $x_1 \le x_2$.
\item (Subscaling) $c_b(\ell x_1) \le \ell c_b(x_1)$ for any $\ell \ge 1$.
\end{itemize}
The first bullet point is immediate, since $E(1,x_1) \subset E(1,x_2)$ if $x_1 \le x_2$.  The second follows by a short scaling argument, see for example \cite[Prop. 2.1]{tara} for the details.  

With these two properties, we can now verify the final claim --- in view of the lower bounds \eqref{eqn:ineq1} and \eqref{eqn:ineq2}, monotonicity would then imply that \eqref{eqn:ineq1} is an equality for $x \le a_0$ close to $a_0$, and subscaling would then imply that \eqref{eqn:ineq2} is an equality for $x \ge a_0$ close to $a_0$.  

If $b = 4/3$, then it follows from Proposition~\ref{lem:randomlemma} that $c_b(a_0)$ has a unique singular point near $a_0$, namely $a_0$ itself.  Thus, in this case $a_0$ also can not be the limit of distinct singular points.

\end{proof}

\end{document}